\def\lambdaexp{\frac{1}{2\lambda}} %lambda in the exponent, should it be \lambda or 1/(2\lambda)???
\DeclarePairedDelimiter{\abs}{\lvert}{\rvert}
\DeclarePairedDelimiter{\norm}{\lVert}{\rVert}
\def\Pr{\mathop{\rm Pr}\nolimits}
\DeclareMathOperator{\MVN}{MVN}
\newcommand{\tr}{\mbox{\rm tr}}
\newcommand{\fbm}{{FBM}}
\newcommand{\LL}{L} %for L^2 and L^1
\newcommand*\diff{\mathop{}\!\mathrm{d}}
\newcommand{\inn}{\langle\cdot ,\cdot\rangle}
\journal{Econometrics and Statistics}
\begin{document}
	
	\begin{frontmatter}
		
		%% Title, authors and addresses
		
		%% use the tnoteref command within \title for footnotes;
		%% use the tnotetext command for theassociated footnote;
		%% use the fnref command within \author or \address for footnotes;
		%% use the fntext command for theassociated footnote;
		%% use the corref command within \author for corresponding author footnotes;
		%% use the cortext command for theassociated footnote;
		%% use the ead command for the email address,
		%% and the form \ead[url] for the home page:
		%% \title{Title\tnoteref{label1}}
		%% \tnotetext[label1]{}
		%% \author{Name\corref{cor1}\fnref{label2}}
		%% \ead{email address}
		%% \ead[url]{home page}
		%% \fntext[label2]{}
		%% \cortext[cor1]{}
		%% \address{Address\fnref{label3}}
		%% \fntext[label3]{}
		
		\title{Regression with I-priors}
		
		%% use optional labels to link authors explicitly to addresses:
		%% \author[label1,label2]{}
		%% \address[label1]{}
		%% \address[label2]{}
		
		\author{Wicher P.\ Bergsma}
		\address{London School of Economics, Houghton Street, London, WC2A 2AE, United Kingdom}
		
		\begin{abstract}
The problem of estimating a parametric or nonparametric regression function in a model with normal errors is considered. 
For this purpose, a novel objective prior for the regression function is proposed, defined as the distribution maximizing entropy subject to a suitable constraint based on the Fisher information on the regression function. The prior is named I-prior. For the present model, it is Gaussian with covariance kernel proportional to the Fisher information, and mean chosen a priori (e.g., 0).

The I-prior has the intuitively appealing property that the more information is available about a linear functional of the regression function, the larger its prior variance, and, broadly speaking, the less influential the prior is on the posterior.
Unlike the Jeffreys prior, it can be used in high dimensional settings. 
The I-prior methodology can be used as a principled alternative to Tikhonov regularization, which suffers from well-known theoretical problems which are briefly reviewed. 

The regression function is assumed to lie in a reproducing kernel Hilbert space (RKHS) over a low or high dimensional covariate space, giving a high degree of generality. 
Analysis of some real data sets and a small-scale simulation study show competitive performance of the I-prior methodology, which is implemented in the R-package \verb|iprior|.
		\end{abstract}

%The model has wide ranging applications, from regression with a functional covariate to classification.
%The case that the regression function lies in the multidimensional fractional Brownian motion RKHS is considered in some detail, when the I-prior methodology has some particular appeal.
		
		%%Graphical abstract
%		\begin{graphicalabstract}
			%\includegraphics{grabs}
%		\end{graphicalabstract}
		
		%%Research highlights
%		\begin{highlights}
%			\item A new and proper objective prior for parametric and nonparametric regression functions based on the Fisher information
%			\item Methodology has better theoretical properties and practical performance than Tikhonov regularization
%			\item Competitive real data performance 
%		\end{highlights}
		
		\begin{keyword}
			%% keywords here, in the form: keyword \sep keyword
			reproducing kernel \sep RKHS \sep Fisher information \sep maximum entropy, objective prior \sep {\em g}-prior \sep empirical Bayes \sep regression \sep nonparametric regression \sep functional data analysis \sep classification \sep Tikhonov regularization.

			%% PACS codes here, in the form: \PACS code \sep code
			
			%% MSC codes here, in the form: \MSC code \sep code
			 \MSC[2010] 62G08 \sep 62C12
			
		\end{keyword}
		
	\end{frontmatter}
	
	%% \linenumbers
	
	%% main text

\section{Introduction}\label{sec-intro}

Consider a sample $(x_1,y_1),\ldots,(x_,y_n)$, where $y_i$ is a real-valued measurement on unit $i$, and $x_i$ lies in a set $\cX$ and represents some characteristic or collection of characteristics, numerical or otherwise, of unit $i$.
Furthermore, let $\Psi=(\psi_{ij})$ be an $n\times n$ positive definite matrix, $h$ a symmetric positive definite kernel over $\cX$, and $\cF$ a set of real-valued functions over $\cX$. 
In this paper we consider the regression model 
\begin{eqnarray}\label{regr}
y_i = f(x_i) + \varepsilon_i  \hspace{6mm}\mbox{$f\in\cF$, $x_i\in\cX$, $i=1,\ldots,n$},
\end{eqnarray}
where 
\begin{eqnarray}\label{err}
(\varepsilon_1,\ldots,\varepsilon_n) \sim \MVN(0,\Psi^{-1}).
\end{eqnarray}
Here, $\Psi$ is taken to be known up to a low dimensional parameter, e.g., $\Psi=\psi I_n$ ($\psi>0$, $I_n$ the $n\times n$ identity matrix), reflecting i.i.d.\  errors.
%\footnote{The assumption of multivariate normality of the errors can be justified by a maximum entropy argument: given that $(\ve_1,\ldots,\ve_n)$ has zero mean and precision matrix $\Psi$, the corresponding multivariate normal distribution maximizes the entropy relative to Lebesgue measure.
%A maximum entropy distribution for a parameter (the error vector in the present case) can be thought of as a `least informative' or `least restrictive' distribution \citep[][see also below]{jaynes03}.
We shall further assume that $\cF$ is a reproducing kernel Hilbert space, i.e., $\cF$ possesses a reproducing kernel $h$ (see Section~\ref{sec-rkhs} for more details). For further reference, we write
\begin{eqnarray}\label{rk}
\mbox{$\cF$ is an RKHS over $\cX$ with reproducing kernel $h$}.
\end{eqnarray}
%The regression model is called {\em linear} if $\cF$ `is a vector space. Important examples include vector spaces of linear, affine, or differentiable functions.

An RKHS is a Hilbert space of functions for which point evaluation is a continuous linear functional, i.e., functions which are sufficiently close in norm are also pointwise close. It follows that a normwise consistent estimator of $f$ is also pointwise consistent. 
The assumption that $\cF$ is an RKHS has the further benefit that the Fisher information on $f$ in~(\ref{regr}) subject to~(\ref{err}) exists.
Our proposed methodology is general in that essentially arbitrary covariate spaces $\cX$ and RKHSs $\cF$ can be used.

If the dimension of $\cF$ is high compared to $n$, the maximum likelihood (ML) estimator of $f$ is typically of little use, for example, it may interpolate the data. 
Only one generally applicable and `automatic' (i.e., requiring no additional user choices) estimation method for $f$ in~(\ref{regr}) subject to~(\ref{err}) and~(\ref{rk}) appears to have been described in the literature, namely {\em Tikhonov regularization}. The Tikhonov regularizer can be defined as the minimizer of the function from $\cF$ to $\mR$ defined by the mapping
\begin{eqnarray}\label{reg}
f \mapsto \sum_{i=1}^n\sum_{j=1}^n \psi_{ij}\big(y_i-f(x_i)\big)\big(y_j-f(x_j)\big) + \lambda^{-1}\norm{f-f_0}_{\cF}^2,
%   f \mapsto \sum_{i=1}^n(y_i-f(x_i))^2 + \lVert f\rVert _{\cF}^2
\end{eqnarray}
where $\lambda>0$ is a scale (or smoothness) parameter (usually estimated using cross-validation), $f_0$ is a prior `best guess' of $f$, and the first term on the right hand side is minus two times the log-likelihood of $f$ up to a constant.
%The Tikhonov regularizer is well known to have a Bayesian interpretation, namely as the posterior mean of $f$ when the prior is a Gaussian with mean $f_0$ and covariance kernel $\lambda h$. % (see Appendix~\ref{app-tik}). 
However, \citet{cp19} showed that the Tikhonov regularizer is essentially inadmissible with respect to squared error loss if $\cF$ is infinite dimensional.

To overcome this problem, it seems reasonable to use a Bayes or empirical Bayes approach, assigning a prior to $f$ whose support is a subset of $\cF$: Wald's complete class theorem then ensures admissibility. In this paper we propose a novel objective prior for $f$, where by `objective' we mean automatically generated without further user input after the model has been chosen. 
Our prior is called {\em I-prior}, where the `I' refers to (Fisher) information, and is defined as a maximizer of entropy subject to a suitable constraint based on the Fisher information. Following \citet{jaynes03} the I-prior can thus be thought of as `least informative'.
For the present problem the I-prior is Gaussian, with prior mean $f_0$, and covariance kernel proportional to the Fisher information on $f$. Under the I-prior, $f$ has the simple representation
\begin{eqnarray}  \label{repr}
f(x) = f_0(x) + \lambda\sum_{i=1}^n h(x,x_i)w_i, \hspace{10mm}(w_1,\ldots,w_n)\sim\MVN(0,\Psi), \lambda>0.
\end{eqnarray}
%where $f_0\in\cF$ is a prior `best guess' of $f$. %\footnote{If $f_0\not\in\cF$, the model is called {\em affine}.}
Since $h(\cdot,x_i)\in\cF$ (see Section~\ref{sec-rkhs}), this representation immediately shows that I-prior realizations are in $\cF$.

From an intuitive perspective, the I-prior is reasonable because if the Fisher information on a linear functional of $f$ is high, the linear functional will have a high prior variance, and the posterior mean may be largely determined by the data; if on the other hand little Fisher information is available for a particular linear functional, the prior variance will be small, and the posterior mean may be largely determined by the prior mean.

%relative to volume measure induced by $\norm{\cdot}_{\cF_n}$ and subject to the constraint that $\norm{f-f_0}_{\cF_n}$ is constant, where as above $f_0\in\cF$ is a `best guess' of $f$ (see Section~\ref{sec-iprior} and Appendix~\ref{app-fish}).  The contours of the I-prior density consist of functions whose norm is equally difficult to estimate in the aforementioned sense. 

%A good approach to estimating $f$ in~(\ref{regr}) subject to~(\ref{err}) and~(\ref{rk}) seems to be a Bayesian or empirical Bayesian one, and in view of this we propose a proper prior for $f$, called I-prior, where the `I' refers to (Fisher) information. 

%In this paper we will consider the RKHS whose reproducing kernel is the Fisher information, and we denote the corresponding norm by $\norm{\cdot}_{\cF_n}$. The quantity $\norm{f}_{\cF_n}^2$ is the Cram{\'e}r-Rao lower bound for $\var\big(\frac12\norm{\hat f}_\cF\big)$, and hence $\norm{f}_{\cF_n}$ can be interpreted as the difficulty of estimating $\norm{f}_\cF$. Broadly speaking, if the errors are i.i.d., estimating the norm of rough functions will be difficult and the norm of smooth functions easy, i.e., given a fixed $\norm{f}_\cF$, $\norm{f}_{\cF_n}$ will be relatively big if $f$ is rough and relatively small if $f$ is smooth.

It can be seen that the I-prior depends on the data $x_1,\ldots,x_1$ and its support is only a subset of $\cF$, which is justified in Sections~\ref{sec-inducedrkhs} and~\ref{sec-datadep}.
%, where it is shown that any $f\in\cF$ is representable as $f(x) = f_0(x) + \lambda\sum_{i=1}^n h(x,x_i)w_i$ where $f_0$ can be chosen to be orthogonal to the $h(x_i,\cdot)$. The data then contain no information on $f_0$ and there is no basis for choosing an objective prior for $f_0$.

%Alternatively, {\em Gaussian process regression} is the method where a Gaussian prior for $f$ is chosen based either on subjective beliefs or knowledge about $f$, or on some further theoretical or practical considerations. 
An alternative approach to estimating $f$ in~(\ref{regr}) is for the user to choose a prior over the space of functions $\cF$ and compute the posterior distribution. Such a choice can be made based on subjective beliefs, prior knowledge, or theoretical or practical considerations. If the chosen prior is Gaussian, this method is called Gaussian process regression (GPR). A general class of Gaussian and L{\'e}vy process priors over an RKHS was characterized by \citet{pillai07}. 
%Of course, if an (approximately) `correct' prior is available, GPR would seem to be preferable to the I-prior methodology. However, instead of directly specifying a prior it is sometimes preferable to specify a function space for the regression function, in which case the I-prior methodology can be used, and should be preferred over the Tikhonov regularizer. 

In the I-prior approach, a number of hyperparameters remain undetermined: the scale parameter $\lambda$ in~(\ref{repr}), any parameters of the error precision matrix $\Psi$, and potentially parameters of the kernel $h$. In the application Section~\ref{sec-applic}, we used an empirical Bayes approach whereby these hyperparameters are estimated by their maximum likelihood estimators. Although the philosophical aspects of empirical Bayes have not been fully resolved \citep[see][with discussion]{efron19}, estimating hyperparameters from the data is in line with the usual approach in the random effects literature and the regularization literature. Naturally, a fully Bayes approach could be used as well by assigning priors to the hyperparameters, but we did not pursue this avenue here as we expect no major differences in outcomes for the examples considered. Furthermore, it is not clear to us what could be a reasonable hyperprior for the scale parameter $\lambda$.

Though the I-prior is new, the idea of using the Fisher information to define an objective prior is not. The Jeffreys prior and Zellner's $g$-prior \citep{zellner86} are also based on the Fisher information, but in a different way. In particular, for a multiple regression model, the $g$-prior covariance matrix for the vector of regression coefficients is proportional to its {\em inverse} Fisher information matrix, in contrast to the actual Fisher information matrix for the I-prior. However, we show in Section~\ref{sec-gprior} that the {\em standard} $g$-prior can be interpreted as an I-prior, if the covariate space is equipped with the Mahalanobis distance.

\citet{jamil18} builds on an earlier version of the present paper and another unpublished manuscript. He provides a number of extensions to the present methodology, including probit and logit models using a fully Bayes approach, Bayesian variable selection using I-priors, and Nystr\"om approximations for speeding up the I-prior methodology. Furthermore, he contributed a user friendly R package \texttt{iprior} \citep{jamil19package}, further described in \citet{jb19}.

%The Fisher information is a fundamental quantity representing the curvature of $f$-divergences, which includes Kullback-Leibler divergence \citep{amari09}, and continues to find new applications \citep[e.g.][]{grrw10}. Recently, the observed Fisher information was used by \citet[with discussion]{bayarri19} to define the {\em Prior-based Bayes information criterion} (PBIC), which is derived using a Laplace approximation of the marginal likelihood. 

An overview of the paper is as follows. 
In Section~\ref{sec-noninf}, we give an expression for the Fisher information on the regression function and define the I-prior, illustrating with basic examples of regression with linear regression functions and one-dimensional smoothing.
In Section~\ref{sec-other}, we compare the methodology with existing methods, including Zellner's $g$-priors (which can be viewed as a special case of I-priors), cubic spline smoothing and more generally Tikhonov regularization, and Jeffreys priors. 
In Section~\ref{sec-marg}, the posterior distribution of the regression function under the I-prior is given. 
Section~\ref{sec-fbm} introduces the RKHSs used in the real data analyses of Section~\ref{sec-applic}. In particular the {\em canonical RKHS} of linear functions is briefly described and a more detailed description is given of the family of fractional Brownian motion (FBM) RKHSs over a Hilbert space.
Smoothness properties of the functions in the FBM RKHS are given, as well as of corresponding I-prior paths. 
In Section~\ref{sec-applic}, we apply the I-prior methodology to a number of data sets and compare predictive performance with a number of published results for the same data sets, showing the I-prior methodology compares well.
In Section~\ref{sec-sim}, a simulation study is done for one-dimensional smoothing, in order to compare with Tikhonov regularization and GPR with a squared exponential prior. 
The concluding Section~\ref{sec-conc} briefly summarizes the paper and gives some directions for future work. 
\ref{app-fish} puts the proposed methodology in a broader setting and may be of interest in its own right. 
%Appendix~\ref{app-tik} gives a proof of the well-known Bayesian interpretation of Tikhonov regularization. 
%In some detail we look at one-dimensional smoothing with I-priors, and show the aforementioned connection with cubic spline smoothing.
%In Section~\ref{sec-other}, a comparison of I-prior modelling with some other methods is given, in particular, GPR, Jeffreys priors and $g$-priors, and we make a few remarks on Fisher kernels, reference priors and dimension reduction.
%\newpage

The main new contribution of this paper is Section~\ref{sec-noninf}. The results in Section~\ref{sec-fbm} are largely well-known among experts, but may be difficult to find in the literature. Section~\ref{sec-estdetails} gives some numerical insights that may also be useful for Gaussian process regression. The developments in \ref{app-fish} are new except when indicated otherwise.

\section{I-priors} \label{sec-noninf}

A definition of RKHSs is recalled in Section~\ref{sec-rkhs}. The Fisher information on the regression function is derived in Section~\ref{sec-fish}. Being positive definite, the Fisher information induces a new RKHS, which is described in Section~\ref{sec-inducedrkhs}. The I-prior is defined in Section~\ref{sec-iprior}. 
We give a justification of the data dependence of the I-prior in Section~\ref{sec-datadep}, and some relatively straightforward applications, to regression with linear functions and one-dimensional smoothing, are given in Section~\ref{sec-basic}.
Except for the first subsection, all developments here are new unless indicated otherwise.

\subsection{Definition of RKHSs and of tensor products of RKHSs}\label{sec-rkhs}

Recall that a Hilbert space is a complete inner product space with a positive definite inner product.
Suppose $\cF$ is a Hilbert space of functions over a set $\cX$ equipped with the inner product $\langle\cdot,\cdot\rangle_\cF$. A symmetric function $h:\cX\times\cX\rightarrow\mR$ is a {\em reproducing kernel} of $\cF$ if and only if
\begin{itemize}
	\item[(a)] $h(x,\cdot)\in\cF$ for all $x\in\cX$
	\item[(b)] $f(x)=\langle f,h(x,\cdot)\rangle_\cF$ for all $f\in\cF$ and $x\in\cX$.
\end{itemize}
A Hilbert space of functions is called a {\em reproducing kernel Hilbert space} (RKHS) if it possesses a reproducing kernel.
If $\cX$ is a set, a function $h:\cX\times\cX\rightarrow\mR$ is said to be positive definite on $\cX$ if
$\sum_{i=1}^n\sum_{j=1}^n\alpha_i\alpha_jh(x_i,x_j)\ge 0$ for all $n=1,2,\ldots$, $\alpha_1,\ldots,\alpha_n\in\mR$, and $x_1,\ldots,x_n\in\cX$.
(Note that standard terminology is slightly different for kernels than for matrices and a positive definite kernel is, in fact, the generalization of a positive semi-definite matrix; see \citet{sfl11} for restricted versions of positive definiteness for kernels.)
By (a) and (b) above, a reproducing kernel $h$ satisfies $h(x,x')=\langle h(x,\cdot),h(x',\cdot)\rangle_\cF$, and is hence positive definite.
The Moore-Aronszajn theorem states that every symmetric positive definite kernel defines a unique RKHS.

Let $\cF_1$ and $\cF_2$ by two RKHSs over $\cX_1$ resp.\ $\cX_2$. For $f_1\in\cF_1$ and $f_2\in\cF_2$, the {\em tensor product} $f_{12}=f_1\otimes f_2$ is defined by $f_{12}(x_1,x_2)=f_1(x_1)f_2(x_2)$. 
The tensor product of $\cF_1$ and $\cF_2$ is denoted as $\cF_1\otimes\cF_2$ and is defined as the closure of the set of functions $\{f_1\otimes f_2|f_1\in\cF_1, f_2\in\cF_2\}$ equipped with the inner product
\[  \langle f_{1}\otimes f_2,f_{1}'\otimes f_2' \rangle_{\cF_1\otimes\cF_2} = \langle f_1,f_1'\rangle_{\cF_1}\langle f_2,f_2'\rangle_{\cF_2}. \]

A Hilbert space $\cB$ over $\cX$ is called a {\em feature space} of $\cF$ with feature $\phi:\cX\rightarrow\cB$ if $f(x)=\langle\phi(x),f\rangle_\cB$ for all $x\in\cX$. $\cF$ is called the {\em canonical feature space}, and has feature $h(x,\cdot)$.

A function $e_x:\cF\rightarrow\mR$ is called a {\em point evaluator} at $x$ if $e_x(f)=f(x)$. It can be shown that a Hilbert space of functions is an RKHS if and only if the point evaluators are continuous.

%We give two basic examples. The fractional Brownian motion RKHS, generalizing Example~\ref{ex-bm}, is described in Section~\ref{sec-fbm}.

\begin{example}\label{ex-lin}
Let $\cF$ be the RKHS over $\cX=\mR^p$ with reproducing kernel $h(x,x')=x^\top x'$. Then $\cF$ consists of functions of the form $f(x)=x^\top\beta$ with norm $\norm{f}_\cF=\norm{\beta}_{\mR^p}$. $\cF$ is also called the {\em dual space} of $\mR^p$. 
\end{example}

\begin{example}\label{ex-bm}
Let $\cF$ be the RKHS over $\mR$ with reproducing kernel
\begin{align}\label{cbm}
h(x,x') = -\frac1{2n^2}\sum_{i=1}^n\sum_{j=1}^n\left(\abs{x-x'} - |x-x_i| - |x'-x_j| + |x_i-x_j|\right), 
\end{align}
for real numbers $x_1,\ldots,x_n$. 
Then $\cF$ is called a centered Brownian motion RKHS (see Section~\ref{sec-fbm1} for a generalization).
It follows from \citet[Section 10]{vz08rep} that $\cF$ consists of functions $f:\mR\rightarrow\mR$ possessing a square integrable derivative, satisfying $\sum f(x_i)=0$, and with norm
\[ \norm{f}_{\cF}^2 = \int \dot f(x)^2dx,  \]
where $\dot f$ denotes the derivative of $f$.
%\[ \lVert f\rVert_{\cF}^2 = w^\top f = f(x_1)\frac{f(x_2)-f(x_1)}{x_2-x_1} + \frac{(f(x_3)-f(x_2))^2}{x_3-x_2} + \ldots + \frac{(f(x_{n-1})-f(x_{n-2}))^2}{x_{n-1}-x_{n-2}} + f(x_n)\frac{f(x_{n})-f(x_{n-1})}{x_n-x_{n-1}} \]
\end{example}

\subsection{The Fisher information on the regression function}\label{sec-fish}

The log-likelihood of parameter $f$ in~(\ref{regr}) subject to~(\ref{err}) is given by
\[  L(f|y) = C - \frac12\sum_{i=1}^n\sum_{j=1}^n\psi_{ij}(y_i-f(x_i))(y_j-f(x_j)) \]
for a constant $C$. 
The next lemma gives the Fisher information $I[f] = -E\nabla^2 L(f|y)$ for $f$.
%By Lemma~\ref{lem-fish} in Appendix~\ref{app-fish}, the Fisher information $I[f]\in\cF\otimes\cF$ for $f$ is given by
%\begin{align}\label{fish}
%   I[f] = -E\nabla^2 L(f|y) = \sum_{i=1}^n\sum_{j=1}^n \psi_{ij}h(\cdot,x_i)\otimes h(\cdot,x_j), 
%\end{align}
%and for any fixed $g\in\cF$, the Fisher information on $f_g=\langle f,g\rangle_\cF$ is $I[f_g]  = \sum_{i,j=1}^n \psi_{ij}g(x_i)g(x_j)$.
\begin{lemma}\label{lem-fish}
	Suppose~(\ref{regr}) subject to~(\ref{err}) and~(\ref{rk}) holds.
	Then the Fisher information $I[f]\in\cF\otimes\cF$ for $f$ is given by
	\[  I[f] = \sum_{i=1}^n\sum_{j=1}^n \psi_{ij}h(\cdot,x_i)\otimes h(\cdot,x_j). \]
	%The Fisher information has finite squared Hilbert-Schmidt norm
	%\[  \big\lVert I[f]\big\rVert_{\cF\otimes\cF}^2 = \sum_{i=1}^n\sum_{j=1}^n\sum_{k=1}^n\sum_{l=1}^n\psi_{ij}\psi_{kl}h(x_i,x_k)h(x_j,x_l) \]
	More generally, if $\cF$ has feature space $\cB$ with feature $\phi:\cX\rightarrow\cB$, then if $f(x)=\langle\phi(x),\beta\rangle_\cB$ the Fisher information $I[\beta]\in\cB\otimes\cB$ for $\beta$ is
	\[  I[\beta] = \sum_{i=1}^n\sum_{j=1}^n\psi_{ij}\phi(x_i)\otimes\phi(x_j). \]
	For any fixed $g\in\cF$, the Fisher information on $f_g=\langle f,g\rangle_\cF$ is
	\begin{align*}
	I[f_g]  = \sum_{i,j=1}^n \psi_{ij}g(x_i)g(x_j).
	\end{align*}
\end{lemma}

\begin{remark}
	If $\cF$ is a Hilbert space of functions but not an RKHS, then there is an $x\in\cX$ such that the point evaluator $e_x(f):=f(x)$ is discontinuous \citep{aronszajn50}. Thus, if there is an $x_i$ in the sample such that the point evaluator at $x_i$ is discontinuous, the Fisher information on $f$ does not exist because the gradient of the likelihood does not exist.
\end{remark}

\noindent\proof[Proof of Lemma~\ref{lem-fish}]
For $x\in\cX$, let $e_x:\cB\rightarrow\mR$ be defined by $e_x(\beta)=\langle\phi(x),\beta\rangle_\cB$. Clearly, $e_x$ is linear and continuous. Hence, the directional derivative of $e_x(\beta)$ in the direction $\gamma\in\cB$ is
\[  \nabla_\gamma e_x(\beta) = \lim_{\delta\rightarrow 0}\frac{e_x(\beta+\delta \gamma)-e_x(\beta)}{\delta} = e_x(\gamma) = \langle \phi(x),\gamma\rangle_\cB . \]
Hence by definition of the gradient (see~\ref{app-grad})  %$f(x)=\langle h(x,\cdot),f\rangle_\cF$,
\begin{eqnarray}\label{gradf}
\nabla e_x(\beta) = \phi(x).
\end{eqnarray}
The log-likelihood of $\beta$ is given by
\begin{eqnarray*}
	L(\beta|\by,\Psi)
	%   &=& -\frac{n}{2}\log(2\uppi) - \frac12\log|\Psi| - \frac12 \sum_{i=1}^n\sum_{j=1}^n\psi_{ij}(y_i-f(x_i))(y_j-f(x_j))\\
	%-\frac{n}{2}\log(2\uppi) - \frac12\log|\Psi|
	=
	C - \frac12 \sum_{i=1}^n\sum_{j=1}^n\psi_{ij}(y_i-e_{x_i}(\beta))(y_j-e_{x_j}(\beta)),
\end{eqnarray*}
for some constant $C$.
Then after standard calculations and using~(\ref{gradf}),
\begin{eqnarray*}
	I[\beta] = -E\left[\nabla^2 L(\beta|\by,\Psi)\right] =  \sum_{i=1}^n\sum_{j=1}^n\psi_{ij}\,\nabla e_{x_i}(\beta)\otimes\nabla e_{x_j}(\beta) = \sum_{i=1}^n\sum_{j=1}^n \psi_{ij}\,\phi(x_i)\otimes\phi(x_j).
\end{eqnarray*}
Taking the canonical feature $\phi(x)=h(x,\cdot)$, the formula for $I[f]$ follows.

For any fixed $g\in\cF$, the Fisher information on $\langle f,g\rangle_\cF$ is
\begin{align*}
\lefteqn{
	I[\langle f,g\rangle_\cF] =
	\langle I[f],g\otimes g\rangle_{\cF\otimes\cF}
	= \sum_{i,j=1}^n \psi_{ij}\langle h(\cdot,x_i)\otimes h(\cdot,x_j),g\otimes g\rangle_{\cF\otimes\cF}
}\\
&
= \sum_{i,j=1}^n \psi_{ij}\langle h(\cdot,x_i),g\rangle_\cF\langle h(\cdot,x_j),g\rangle_\cF = \sum_{i,j=1}^n \psi_{ij}g(x_i)g(x_j).
\end{align*}

\endproof

\subsection{The RKHS induced by the Fisher information} \label{sec-inducedrkhs}

The Fisher information, being positive definite, induces a new RKHS over a subspace of~$\cF$. This RKHS is important because it describes the available information on $f$ in the sense explained below. We describe this RKHS next. 

Define
\begin{eqnarray}\label{fndef}
\cF_n = \Big\{ f:\cX\rightarrow\mR \Big| f(x) = \sum_{i=1}^n h(x,x_i)w_i \mbox{ for some $w_1,\ldots,w_n\in\mR$} \Big\}
\end{eqnarray}
and let $h_n$ be the kernel over $\cX$ defined by $h_n(x,x')=I[f](x,x')$, i.e.,
\begin{align}\label{hndef}
h_n(x,x') = \sum_{i=1}^n\sum_{j=1}^n \psi_{ij}h(x,x_i)h(x',x_j).
\end{align}
Note that, since $h(\cdot,x_i)\in\cF$, $\cF_n$ is a subspace of~$\cF$.
The next lemma describes the RKHS induced by the Fisher information.
\begin{lemma}\label{lem-f2}
	Let $\cF_n$ be equipped with the inner product
	\begin{equation}\label{fninprod}  \langle f_w,f_{w'}\rangle_{\cF_n}^2 = w^\top\Psi^{-1}w' \end{equation}
	where $w=(w_1,\ldots,w_n)^\top$ and $f_w(x)=\sum h(x,x_i)w_i$.
	Then $h_n$ defined by~(\ref{hndef}) is a reproducing kernel of $\cF_n$.
\end{lemma}
\noindent\proof[Proof]
Denote by $\psi_{ij}^-$ the $(i,j)$th element of $\Psi^{-1}$. By~(\ref{fninprod}) we have $\langle h(\cdot,x_i), h(\cdot,x_j) \rangle_{\cF_n}=\psi_{ij}^-$ so
\begin{align*}
\lefteqn{
	\langle f_w, h_n(x,\cdot) \rangle_{\cF_n}
	= \bigg\langle \sum_{i=1}^n h(\cdot,x_i)w_i, \sum_{j=1}^n\sum_{k=1}^n \psi_{jk} h(x,x_j)h(\cdot,x_k) \bigg\rangle_{\cF_n} }\\
&= \sum_{i=1}^n w_i \sum_{j=1}^n\sum_{k=1}^n h(x,x_j)\psi_{jk}\big\langle h(\cdot,x_i), h(\cdot,x_k) \big\rangle_{\cF_n} 
= \sum_{i=1}^n w_i \sum_{j=1}^n\sum_{k=1}^n h(x,x_j)\psi_{jk}\psi_{ik}^- \\
&= \sum_{i=1}^n w_i \sum_{j=1}^n h(x,x_j)\delta_{ij}
= \sum_{i=1}^n w_i h(x,x_i) = f_w(x).
\end{align*}
Hence, $h_n$ is a reproducing kernel for $\cF_n$.
\endproof

%The Cram\'er-Rao lower bound for the variance of an estimator $g(\hat\theta)$ of $g(\theta)$ is given by $\nabla g(\theta)^\top I[\theta]^{-1}\nabla g(\theta)$, i.e., the squared norm of $\nabla g(\theta)$ in the RKHS with reproducing kernel the Fisher information for $\theta$. Note that the inverse can be replaced by a generalized inverse provided $\theta$ lies in the span of the columns of $I[\theta]$. The quantity $\nabla g(\theta)^\top I[\theta]^{-1}\nabla g(\theta)$ can be viewed as measuring how difficult it is to estimate $g(\theta)$. 
%With $g(f)=\half\norm{f}_\cF^2$, we have $\nabla g(f)=f$. Hence, $\norm{f}_{\cF_n}^2$ is the Cram\'er-Rao bound for $\var(\half\norm{f}_\cF^2)$, and $\norm{f}_{\cF_n}$ can be interpreted as measuring the difficulty of estimating $\norm{f}_{\cF}$.

We immediately obtain the following interpretation of $\norm{\cdot}_{\cF_n}^2$. Let $\hat f$ be an unbiased estimator of $f$ in model~(\ref{regr}) subject to~(\ref{err}) and~(\ref{rk}). Then for $g\in\cF_n$, $\norm{g}_{\cF_n}^2$ is the Cram\'er-Rao lower bound for the variance of $\var\big(\langle g,\hat f\rangle_\cF\big)$, i.e.,
\begin{align}\label{cr}
   \var\big(\langle g,\hat f\rangle_\cF\big) \ge \norm{g}_{\cF_n}^2. 
\end{align}
It follows from the theory of weighted least squares that the lower bound is achieved if $\hat f$ is a maximum likelihood estimator of $f$. 

%next lemma that the lower bound is attained for the maximum likelihood estimator of $f$:
%\begin{lemma}\label{lem-cr}
%	Let $\hat f$ be a maximum likelihood estimator of $f$ in model~(\ref{regr}) subject to~(\ref{err}). Then for any $g,g'\in\cF_n$, 
%	\[ \cov\big(\langle g,\hat f\rangle_\cF,\langle g',\hat f\rangle_\cF\big) =\langle g,g'\rangle_{\cF_n}^2 \]
%\end{lemma}

The next lemma implies that the data do not contain any Fisher information to distinguish between two functions $f$ and $f'$ if $f(x_i)=f'(x_i)$ for $i=1,\ldots,n$.  
\begin{lemma}\label{lem-orth}
	The orthogonal complement of $\cF_n$ in $\cF$ is 	
	\begin{eqnarray}\label{rndef}
	\cF_n^\perp = \Big\{ f\in\cF \Big| f(x_1) = \ldots = f(x_n) = 0 \Big\}.
	\end{eqnarray}
	We can hence uniquely decompose $f\in\cF$ as
	\begin{eqnarray}\label{fnip1}
	f(x) = f_n(x) + r_n(x) \quad f_n\in\cF_n,r_n\in\cF_n^\perp.
	\end{eqnarray}
	Then $I[f_n]=I[f]$ and $I[r_n]=0$. Furthermore, the Fisher information on any nonzero linear functional of $f_n$ is strictly positive.
\end{lemma}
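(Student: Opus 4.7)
The plan is to reduce everything to the reproducing property of $h$ in $\cF$.

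First, since $f(x_i)=\langle f,h(\cdot,x_i)\rangle_\cF$ by the reproducing property, and since $\cF_n$ is by construction the linear span of $h(\cdot,x_1),\ldots,h(\cdot,x_n)$, an element $f\in\cF$ is orthogonal to $\cF_n$ iff $\langle f,h(\cdot,x_i)\rangle_\cF=0$ for $i=1,\ldots,n$, iff $f(x_i)=0$ for $i=1,\ldots,n$. This is the claimed characterization of $\cF_n^\perp$. The existence and uniqueness of the decomposition $f=f_n+r_n$ with $f_n\in\cF_n$, $r_n\in\cF_n^\perp$ is then immediate from the orthogonal projection theorem, since $\cF_n$ is finite dimensional and hence a closed subspace of $\cF$.

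Second, for the Fisher information equalities, the crucial observation is that $L(f\mid y)$ depends on $f$ only through the evaluation vector $(f(x_1),\ldots,f(x_n))$. Combining $f=f_n+r_n$ with the first step, $r_n(x_i)=0$, so $f(x_i)=f_n(x_i)$ for every $i$. Consequently the likelihood regarded as a function of $f_n\in\cF_n$ alone coincides with the one regarded as a function of $f\in\cF$, which gives $I[f_n]=I[f]$. Symmetrically, the likelihood is constant as a function of $r_n\in\cF_n^\perp$, so its Fisher information vanishes, $I[r_n]=0$.

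Third, for the strict positivity, I would use Riesz representation within $\cF_n\subset\cF$: every nonzero continuous linear functional on $\cF_n$ can be written as $f_n\mapsto\langle f_n,\tilde g\rangle_\cF$ for some nonzero $\tilde g\in\cF_n$. Applying the formula $I[f_{\tilde g}]=\sum_{i,j}\psi_{ij}\tilde g(x_i)\tilde g(x_j)$ from (\ref{fish}), the Fisher information equals $g_\ast^\top\Psi\,g_\ast$ where $g_\ast=(\tilde g(x_1),\ldots,\tilde g(x_n))^\top$. If $g_\ast$ were zero, the first step would place $\tilde g\in\cF_n^\perp$, hence $\tilde g\in\cF_n\cap\cF_n^\perp=\{0\}$, contradicting $\tilde g\neq 0$; so $g_\ast\neq 0$ and strict positivity follows from positive definiteness of $\Psi$.

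The main potential obstacle is interpretive rather than computational: in the Gaussian model of this paper the Fisher information tensor (\ref{fish}) does not depend on the parameter value, so "$I[f_n]=I[f]$" must be read as saying that restricting the parameterization from $\cF$ to $\cF_n$ loses no information, while "$I[r_n]=0$" says that the $\cF_n^\perp$-parameterization carries none. Once this is understood, all three assertions flow from the single identity $f(x_i)=f_n(x_i)$ produced by the reproducing-kernel characterization of $\cF_n^\perp$.
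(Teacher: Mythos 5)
Your proposal is correct and follows essentially the same route as the paper: the reproducing property identifies $\cF_n^\perp$ with the functions vanishing at $x_1,\ldots,x_n$, the fact that the likelihood depends on $f$ only through $(f(x_1),\ldots,f(x_n))$ gives $I[f_n]=I[f]$ and $I[r_n]=0$, and positive definiteness of $\Psi$ together with $(g(x_1),\ldots,g(x_n))\neq 0$ gives strict positivity (the paper phrases this with a general $g\in\cF\setminus\cF_n^\perp$ rather than a Riesz representer in $\cF_n$, but the computation via Lemma~\ref{lem-fish} is identical). Your closing remark about the parameter-independence of the Fisher information in the Gaussian model is a fair reading of what the paper leaves implicit.
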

\begin{proof}[Proof of Lemma~\ref{lem-orth}]
	Let $f_n=\sum_{i=1}^n h(\cdot,x_i)w_i\in\cF_n$ and let $g\in\cF$.
	Then by the reproducing property of $h$, $\langle f_n,g\rangle_\cF=\sum_{i=1}^n w_i\langle h(x_i,\cdot),g\rangle_\cF=\sum_{i=1}^n w_ig(x_i)$. But this vanishes for any $w_1,\ldots,w_n$ iff $g(x_1)=\ldots g(x_n)=0$, proving~(\ref{rndef}).

	Denote the log-likelihood of a parameter by $L(\cdot|y)$. Since $L(f|y)=L(f_n|y)$ and $f_n\perp_\cF r_n$, the definition of Fisher information immediately implies $I[f_n]=I[f]$ and $I[r_n]=0$.
	For $g\in\cF$, $\langle f_n,g\rangle_\cF\ne 0$ iff $g\not\in\cF_n^\perp$. But then, if $g\in\cF\setminus\cF_n^\perp$, Lemma~\ref{lem-fish}  implies that  $I[\langle f_n,g\rangle_\cF]=\sum\psi_{ij}g(x_i)g(x_j)>0$. 
\end{proof}

\begin{remark}
	Another way to obtain the conclusion that the data contain no Fisher information to distinguish between two functions which have the same values at $x_1,\ldots,x_n$ is as follows. 
	The {\em Fisher information metric} over $\cF$ is the distance induced by semi-norm over $\cF$ given by
	\[  \norm{f}_I^2 = \big\langle I[f], f\otimes f \big\rangle_{\cF\otimes\cF} = \sum_{i=1}^n\sum_{j=1}^n\psi_{ij}f(x_i)f(x_j). \]
	The quantity $\norm{f-f'}_I$ can be thought of as the `amount of information between $f$ and $f'$'.
	We see that $\norm{f-f'}_I=0$ if and only if $f(x_i)=f'(x_i)$ for $i=1,\ldots,n$. 
\end{remark}

\begin{example} 
	Suppose $f(x)=\sum_{i=1}^n h(x,x_i)w_i\in\cF_n$ and the errors in~(\ref{regr}) are autoregressive, in particular, $\varepsilon_1=\eta_1$, $\varepsilon_{i+1}=\alpha\varepsilon_{i}+\eta_i$ ($i=2,\ldots,n$), with the $\eta_i$ i.i.d.\  $N(0,\sigma^2)$ and $-1\le\alpha\le 1$.
	Then from Lemma~\ref{lem-f2} and \ref{app-ar},
	\begin{eqnarray*}%\label{d2}
		\norm{f}_{\cF_n}^2 = w^\top\Psi^{-1}w = \frac{1}{\sigma^2}\sum_{i=1}^n\Big(\sum_{j=i}^n\alpha^{j-i}w_j\Big)^2,
	\end{eqnarray*}
	where $0^0:=1$.
	We have the special cases
	\begin{eqnarray*}%\label{d2}
		\norm{f}_{\cF_n}^2 = \frac{1}{\sigma^2}\times
		\left\{
		\begin{array}{ll}
			\sum_{i=1}^{i-1} (w_{i+1}-w_i)^2 & \alpha=-1\\
			\sum_{i=1}^n w_i^2 & \alpha=0\\
			\sum_{i=1}^n\big(\sum_{j=i}^nw_j\big)^2 & \alpha=1
		\end{array}
		\right. .
	\end{eqnarray*}
\end{example}

\begin{example}\label{ex-smooth}
	(Continuation of Example~\ref{ex-bm}.)
	With $\cF$ the centered Brownian motion RKHS over $\mR$, it can be seen that $\cF_n$ is the set of functions which integrate to zero and are piecewise linear with knots at $x_1,\ldots,x_n$.
	Hence for $f\in\cF_n$,
	\[ \norm{f}_{\cF}^2 = \int \dot f(x)^2dx = \sum_{i=1}^{n-1}\frac{(f(x_{i+1})-f(x_i))^2}{x_{i+1}-x_i} . \]
	%\[ \lVert f\rVert_{\cF}^2 = w^\top f = f(x_1)\frac{f(x_2)-f(x_1)}{x_2-x_1} + \frac{(f(x_3)-f(x_2))^2}{x_3-x_2} + \ldots + \frac{(f(x_{n-1})-f(x_{n-2}))^2}{x_{n-1}-x_{n-2}} + f(x_n)\frac{f(x_{n})-f(x_{n-1})}{x_n-x_{n-1}} \]
	Furthermore, if $f\in\cF_n$ satisfies $f(x)=\sum h(x,x_i)w_i$ and assuming $x_1\le x_2,\ldots\le x_n$, then it is straightforward to check (by substituting $f(x_k)=\sum h(x_k,x_i)w_i$ into the right hand sides) that
	%Supposing for simplicity $x_1\le x_2,\ldots\le x_n$, the RKHS norm is $\norm{f}_{\cF_n}^2 = w^\top\Psi^{-1}w$,
	%where
	\begin{eqnarray*}
		w_1 = \frac{f(x_{2})-f(x_{1})}{x_{2}-x_{1}}, \quad\quad w_n = \frac{f(x_{n})-f(x_{n-1})}{x_{n}-x_{n-1}}
	\end{eqnarray*}
	and for $i=2,\ldots,n-1$,
	\begin{eqnarray*}
		w_i = \frac{f(x_{i+1})-f(x_{i})}{x_{i+1}-x_{i}} - \frac{f(x_{i})-f(x_{i-1})}{x_{i}-x_{i-1}}.
	\end{eqnarray*}
	%and $w=(w_1,\ldots,w_n)^\top$. 
	It follows that $f\in\cF_n$ can be represented as
	\begin{align}\label{fint}
	f(x) = \int_{-\infty}^x \beta(t)dt ,
	\end{align}
	where
	\begin{align}\label{betadif}
	\beta(t) = \sum_{i:x_i\le t}w_i = \frac{f(x_{i_t+1})-f(x_{i_t})}{x_{i_t+1}-x_{i_t}}, 
	\end{align}
	with $i_t=\max_{x_i\le t}i$. Note that $\sum w_i=0$ and hence $\lim_{t\rightarrow\pm\infty}\beta(t)=0$.
	
	By Lemma~\ref{lem-f2}, $\norm{f}_{\cF_n}^2=w^\top\Psi^{-1}w$. For i.i.d.\  errors, the above expressions for the $w_i$ show this is proportional to
	\begin{align}\label{pen} \Big(\frac{f(x_{2})-f(x_{1})}{x_{2}-x_{1}}\Big)^2 + 
	\sum_{i=2}^{n-1} \Big(\frac{f(x_{i+1})-f(x_{i})}{x_{i+1}-x_{i}} - \frac{f(x_{i})-f(x_{i-1})}{x_{i}-x_{i-1}}\Big)^2
	+ \Big(\frac{f(x_{n})-f(x_{n-1})}{x_{n}-x_{n-1}}\Big)^2. 
	\end{align}
\end{example}

\subsection{Definition of I-priors}\label{sec-iprior}

By Lemma~\ref{lem-orth}, the set $\cF$ is too big for the purpose of estimating $f$, in the sense that, for pairs of functions in $\cF$ with the same values at $x_1,\ldots,x_n$, the data do not contain information on whether one is closer to the truth than the other.
An objective prior for $f$ therefore need not have support $\cF$, instead it is sufficient to consider priors with support $f_0+\cF_n$, where $f_0\in\cF$ is fixed and chosen a priori as a `best guess' of $f$. Lemma~\ref{lem-orth} implies the data contain information to allow a comparison between any pair of functions in $f_0+\cF_n$.

We follow \citet{jaynes57a,jaynes57b,jaynes03} and define an objective prior using the maximum entropy principle. 
The entropy of a prior $\pi$ over $f_0+\cF_n$ relative to a measure $\nu$ is defined as
\[  {\cal E}(\pi) = - \int_{f_0+\cF_n}\pi(f)\log\pi(f)\nu(\diff f). \]
We take $\nu$ to be volume measure induced by $\norm{\cdot-f_0}_{\cF_n}$, which is flat. 
An I-prior for $f$ is now defined as a prior maximizing entropy subject to a constraint of the form
\[  E_\pi\norm{f-f_0}_{\cF_n}^2= \mbox{constant}. \]
Variational calculus shows that I-priors for $f$ are the Gaussian variables with mean $f_0$ and covariance kernel proportional to $h_n$ given by~(\ref{hndef}), i.e., 
\[  \cov_{\pi}(f(x),f(x')) = \lambda \sum_{i=1}^n\sum_{j=1}^n \psi_{ij}h(x,x_i)h(x',x_j), \]
for some $\lambda>0$. 
Thus, if $f$ has an I-prior distribution, we can use the convenient representation~(\ref{repr}).
%Note that we don't need to make explicit the restriction of the support to $f_0+\cF_n$, the I-prior can be taken as a prior over $\cF$.

The posterior distribution of $f$ and the marginal likelihood of $(\lambda,\Psi)$ are given in Section~\ref{sec-marg}.

\subsection{Data dependence of I-prior}\label{sec-datadep}

It can be seen that the I-prior depends on the data $x_1,\ldots,x_1$. An argument can be made that any objective prior must in fact be data dependent, and representable in the form 
\begin{align}\label{ranrep}
f(x) = f_0(x) + \sum_{i=1}^n h(x,x_i)\alpha_i \mbox{ for random $\alpha_i$}.
\end{align}
The argument is as follows (details on the assertions are given in Section~\ref{sec-fish}). Any $f\in\cF$ can be uniquely decomposed as $f(x)=f_n(x)+r_n(x)$, where $f_n(x)=\sum_{i=1}^n h(x,x_i)w_i$ for some $w_1,\ldots,w_n$ and $r_n(x_i)=0$ for $i=1,\ldots,n$. Since the likelihood for $f$ does not depend on $r_n$, and $f_n$ and $r_n$ are orthogonal in $\cF$, the data contain no (Fisher) information on $r_n$. Therefore, unless we have actual prior information about the relation between $r_n$ and $f_n$, it is not possible to do statistical inference on $r_n$ using the data at hand. If the prior for $f$ is representable as in~(\ref{ranrep}), this implies the  prior for $r_n$ is a point mass at our prior guess of it, and so is the posterior for $r_n$ (note that our prior guess for $r_n$ is the orthogonal projection of $f_0$ onto the subspace of $\cF$ consisting of functions $r$ for which $r(x_1)=\ldots=r(x_n)=0$). 
In summary, only with a prior representable as in~(\ref{ranrep}), all the `information' available about $r_n$ is our prior guess of it, and it remains nothing more than a mere prior guess even after observing the data. Our maximum entropy argument in Section~\ref{sec-iprior} and \ref{app-fish} then leads to the I-prior represented in~(\ref{repr}).

\subsection{Applications}\label{sec-basic}

We give the I-priors for linear regression functions and for one-dimensional smoothing, summarized in Table~\ref{tbl-basic}

\begin{table}
	\begin{tabular}{llll}
		$\cX$                           & kernel $h(x,x')$   & $f(x)$                         & I-prior \\ \hline
		$\mR^p$ (dot product)           & $x^\top x'$        & $x^\top\beta$                  & $\beta\sim\MVN(\beta_0,\lambda X^\top X)$    \\
		$\mR^p$ (Mahal.\ metric)        & $x^\top(X^\top X)^{-1} x'$        & $x^\top\beta$                  & $\beta\sim\MVN(\beta_0,g(X^\top X)^{-1})$    \\
		$\mR$                           & centered BM        & $\int_{-\infty}^x\beta(t)dt$   & $\beta\sim$ Brownian bridge    \\
		$\mR^p$                         & centered BM        & $f$ is H\"older $\ge 1/2$& $f$ is H\"older 1 (a.s.)    \\
%		$\{1,\ldots,K\}$                & $I(x=x')$          & $\beta_x$                      & $\beta_x\stackrel{\rm i.i.d.}{\sim}N(0,n_x)$    \\
	\end{tabular}
	\caption{I-priors for the illustrative examples in Section~\ref{sec-basic} (first three rows), assuming model~(\ref{regr}) with i.i.d.\  normal errors. The model in the last row is discussed in Section~\ref{sec-fbm}. The I-prior when $\mR^p$ is equipped with the Mahalanobis metric is also known as the $g$-prior. BM stands for Brownian motion (the same as FBM-1/2). See the text for further details.}
	\label{tbl-basic}
\end{table}

\subsubsection{Linear regression functions}\label{sec-lin}

Consider the model
\[  y_i = x_i^\top\beta + \varepsilon_i, \hspace{10mm}x_i\in\mR^p,\, i=1,\ldots,n \]
subject to~(\ref{err}). With $X$ the $n\times p$ matrix whose $i$th row is $x_i^\top$, the Fisher information on $\beta$ is
\[  I[\beta] = X^\top\Psi X . \]
Hence, the I-prior for $\beta$ with prior mean $0$ is the multivariate normal distribution with covariance matrix $\lambda X^\top\Psi X$,
i.e., under the I-prior
\begin{equation}\label{liniprior}  \beta\sim\MVN(\beta_0,\lambda X^\top\Psi X) \end{equation}
for a scale parameter $\lambda>0$ and a prior mean $\beta_0\in\mR^p$. 
%The I-prior covariance kernel for $f$ is given by
%\[  \cov[f(x),f(x')] = \cov[x^\top\beta,x'^\top\beta] = x^\top\cov(\beta) x' =  \lambda x^\top X^\top\Psi X x' \]
This prior is suitable whether $p$ is small or large (see Section~\ref{sec-applic} where the prior is used for potentially large $p$).

In the above, we assumed $\cF$ is the dual space of $\mR^p$, i.e., if $f(x)=x^\top\beta$,
$\norm{f}_\cF = \norm{\beta}_{\mR^p}$.
This is a suitable space if, say, a vector $x$ is a series of repeated measurements on the same scale, but is not suitable if $x$ consists of measurements on different scales, such as height in metres and weight in kilograms. In that case, it is better to instead equip $\mR^p$ with the Mahalanobis distance, i.e.,
\[  \norm{\beta}_\text{Mah}^2=\beta^\top(X^\top\Psi X)^{-1}\beta . \]
With this metric, the Fisher information on $\beta$ is $(X^\top\Psi X)^{-1}$ (rather than $X^\top\Psi X$ in the standard Euclidean metric), and the I-prior becomes
\begin{align}\label{gprior}  \beta\sim\MVN(\beta_0,\lambda\,(X^\top\Psi X)^{-1}).   \end{align}
This is the usual $g$-prior with $g=\lambda$ \citep{zellner86}.

In contrast to~(\ref{liniprior}), the prior~(\ref{gprior}) is scale invariant and hence suitable for covariates measured on different scales. However, it has the drawback that it is only suitable if $p\ll n$, whereas~(\ref{liniprior}) can be used even if $p>n$.
In \ref{app-fish} a different derivation of the $g$-prior is given as well as a generalization.

\subsubsection{One-dimensional smoothing with I-priors and connection with cubic spline smoothing}\label{ex-smooth2}

We continue Examples~\ref{ex-bm} and \ref{ex-smooth}, where we assumed that the regression function lies in the centered Brownian motion RKHS over $\mR$.

With i.i.d.\  errors, under the I-prior the $w_i$ in~(\ref{betadif}) are i.i.d.\  zero mean normals, 
so that $\beta$ defined there is an ordinary Brownian bridge with respect to the empirical distribution function $P_n(x)=\sum_{i=1}^nI(x_i<x)$. It is straightforward to verify that $\beta$ then has covariance kernel
\begin{align*}%\label{bbcov}
\cov(\beta(x),\beta(x')) = n^2\cov(P_n(x),P_n(x')) = n\big[\min(P_n(x),P_n(x'))-P_n(x)P_n(x')\big].
\end{align*}
From~(\ref{fint}), the prior process for $f$ is thus an integrated Brownian bridge. This shows a close relation with cubic spline smoothers, which can be interpreted as the posterior mean when the prior is an integrated Brownian motion
\citep[][Section~3.8.3]{wahba78,wahba90,gs94}.
Under the I-prior, we have $\var(\beta(x))=P_n(X<x)(1-P_n(X<x))$, which shows an automatic boundary correction: close to the boundary there is little Fisher information on the derivative of the regression function, so the prior variance is small. This will then lead to more shrinkage of the posterior derivative of $f$ towards the derivative of the prior mean.

Note that the problem of finding the posterior mean of $f$ under the I-prior can be formulated as a penalized generalized least squares problem with penalty proportional to $\norm{f}_{\cF_n}^2$ which is proportional to~(\ref{pen}). 

The natural cubic spline smoother and I-prior estimator under the Brownian motion RKHS are hence similar, but have the following main differences (we assume for simplicity that the prior mean is zero). In the range of the observed $x$-values, the former is piecewise cubic and the latter is piecewise linear; outside this range, they are linear and constant, respectively.
However, the two methods are based on different models: due to the penalty $\int\ddot f(x)^2dx$, the cubic spline smoother assumes two derivatives, whereas the I-prior estimator only assumes one, i.e., at least from a theoretical perspective the I-prior has broader applicability.

In the present setting, the smoother the errors (e.g., the more positively autocorrelated the errors are), the more difficult it is to estimate the regression function. This is because smoother errors are more like a function in the RKHS than rougher errors. The I-prior accommodates for this fact by roughening the prior, so that rough functions in the RKHS can still be estimated reasonably even if the errors are relatively smooth.
Let us consider AR(1) or MA(1) errors.
If the errors are dependent, $\beta$ is a generalized Brownian bridge because, whilst being tied to zero outside the range of the $x_i$s, the increments which are summed over are dependent.
Note that $\beta$ is piecewise constant with jumps at the $x_i$, so the I-prior for $f$ is piecewise linear with knots at the $x_i$, and the same holds true for the posterior mean.
As follows from Lemma~\ref{lem-ar} in \ref{app-ar}, if the errors are an AR(1) process with parameter $\alpha$ and error variance $\sigma^2$, the $w_i$ form an MA(1) process with parameter $-\alpha$ and error variance $\sigma^{-2}$, and if the errors are an MA(1) process with parameter $\alpha$ and error variance $\sigma^2$, the $w_i$ form an AR(1) process with parameter $-\alpha$ and error variance $\sigma^{-2}$.
It follows that if the errors are a random walk (i.e., and AR(1) process with parameter $\alpha=1$) then it can be checked that the I-prior is also a random walk and the model is not identified, i.e., the I-prior has (essentially) the same distribution as the errors and the regression curve cannot be separated from the errors. Thus, if the errors form a random walk, and all we know about the regression curve is that it is weakly differentiable, there is no way of determining what part of the variation in the $y_i$s is due to the regression curve or due to the errors. To estimate $f$, a stronger assumption has to be made, e.g., that it is twice weakly differentiable.

%\subsection{Varying intercept model}

%Suppose $\cX=\{1,\ldots,K\}$ ($K\ge 2$) and $h(x,x')=I(x=x')$ (where $I$ is the indicator function). Then $\cF$ is the set of all functions over $\cX$, and we can write $f(x)=\beta_x$ for some $\beta_x\in\mR$. Based on a set of observations $x_1,\ldots,x_n\in\cX$, the Fisher information for $\beta=(\beta_1,\ldots,\beta_K)$ is the matrix with $()

\section{Comparison with other methods}\label{sec-other}

We now give a brief overview of some other existing methods for estimating the regression function in~(\ref{regr}).
In Section~\ref{sec-intro} we compared the I-prior methodology with Gaussian process regression. 
%However, note that the only methods designed to generally deal with~(\ref{regr}) subject to both~(\ref{err}) and~(\ref{rk}) are Tikhonov regularization and the I-prior methodology.

\subsection{Zellner's $g$-priors}\label{sec-gprior}

Zellner's $g$-prior \citep{zellner86} is a scale invariant prior for linear regression functions, and is suitable when covariates are measured on different scales (such as height in metres and weight in kilograms).
In Section~\ref{sec-lin} we showed that it can be viewed as a special case of the I-prior, when the covariate space is equipped with Mahalanobis distance. 
Alternatively, in~\ref{app-maxent} a different construction (and a generalization) of the $g$-prior is given, namely as a maximum entropy prior subject to a constraint involving the Fisher information metric; this construction is essentially the one originally given by Zellner.
A drawback of the $g$-prior is that it cannot be used if the number of covariates $p$ is large compared to the sample size $n$.

\subsection{Cubic spline smoothing}

If $\cF$ is the aforementioned centered Brownian motion RKHS over $\mR$ (see Section~\ref{ex-smooth2} for more details) and the errors are i.i.d., I-prior estimation is similar to cubic spline smoothing. Whereas the cubic spline smoother minimizes~(\ref{reg}) with $\norm{f}_\cF^2=\int\ddot f(x)^2dx$ and $\psi_{ij}=I(i=j)$ ($I$ is the indicator function), the I-prior estimator minimizes a similar expression with $\int \ddot f(x)^2dx$ replaced by~(\ref{pen}).
Note that if the $x_i$ are equally spaced, then~(\ref{pen}) multiplied by $n^2$ is a discrete approximation of $\int \ddot f(x)^2dx$.
%As a result, the posterior mean under the I-prior is piecewise linear rather than piecewise cubic, and constant outside the domain

Although the two methods will therefore tend to yield similar smoothers, there is a big theoretical difference between the two methods, in that I-prior estimation only assumes $f$ has one derivative, while cubic spline smoothing assumes two derivatives.

\subsection{Tikhonov regularization}

The Tikhonov regularizer of a regression function $f$ is the function minimizing~(\ref{reg}). It is well-known to have a Bayesian interpretation, namely as the posterior mean of $f$ when the prior is a Gaussian with mean $f_0$ and covariance kernel $\lambda h$. % (see Appendix~\ref{app-tik}). 
Although at first sight Tikhonov regularization seems intuitively reasonable, it has been shown to be inadmissible with respect to squared error loss \citep{cp19}. In particular, it may undersmooth every true regression function in $\cF$ in the sense we explain now. In the Bayesian interpretation of regularization, for infinite dimensional $\cF$, the prior probability of $\cF$ is well known to be zero \citep[e.g.][Section~4.1]{lifshits12}. Undersmoothing can be said to occur if the prior function paths are, with probability one, rougher than those in $\cF$. This happens, for example, if $\cF$ is a (centered) Brownian motion RKHS, in which case the prior sample paths have regularity 0.5, while the functions in the RKHS have regularity at least 1 (see Figure~\ref{fig-pathsreg} for an illustration and Section~\ref{sec-reg} for more details). Although optimal asymptotic convergence rates can often still be obtained with undersmoothing \citep{vz07}, the simulations in Section~\ref{sec-sim} show that for finite samples undersmoothing can have significant adverse effects on the estimation of functions in $\cF$. 

\begin{figure}[tbp]
	\centering  
	\includegraphics[width=80mm]{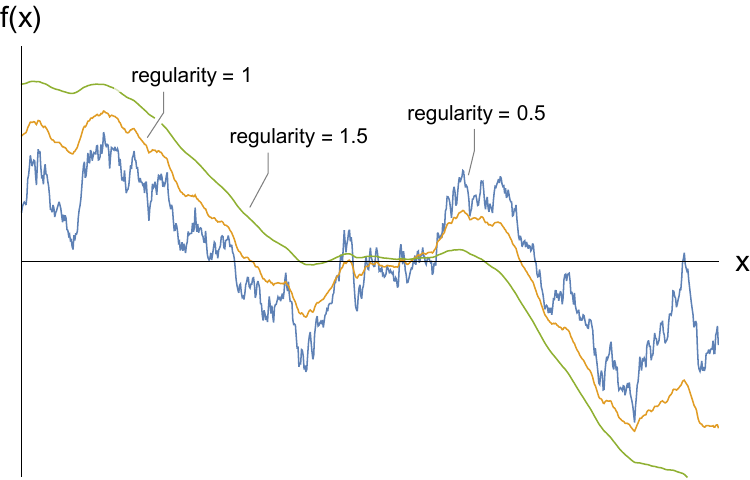}
	\caption{Randomly generated paths of different regularity. The path with regularity 0.5 is a centered Brownian motion path. Functions in the corresponding centered Brownian motion RKHS (also called FBM-1/2 RKHS) have regularity greater than 1, and can be seen to be significantly smoother than the corresponding process paths.}
	\label{fig-pathsreg}
\end{figure}

%Though the standard $g$-prior is quite reasonable in certain situations, in general it does not appear to be a good idea to have a prior covariance equal to the inverse Fisher information. In particular, if a large amount of information is available on a linear functional of $f$, its prior variance would then be small, and rather than letting the data speak given the large amount of information, the posterior will be shrunk heavily towards the prior mean. Thus, it appears the I-prior interpretation of the standard $g$-prior is more reasonable than the original interpretation.

\subsection{Jeffreys priors}

Like the I-prior, the Jeffreys prior is based on the Fisher information, in particular, the Jeffreys prior is proportional to the square root of the determinant of the Fisher information. Hence, it is suitable only for low-dimensional problems. An interesting property of the Jeffreys prior is that it is invariant to parameterization, which the I-prior is not (this is easy to see as the I-prior depends on the reproducing kernel of the RKHS). For model~(\ref{repr}) subject to~(\ref{err}) and~(\ref{rk}), the Fisher information on the regression function is given by Lemma~\ref{lem-fish} and can be seen not to depend on $f$, so the Jeffreys prior is flat, and for the purposes of this paper not very useful except potentially in low-dimensional regression \citep[e.g.][]{il91}. A more extensive discussion is given in \ref{app-fish}.

\subsection{Reference priors}

Bernardo introduced {\em reference priors} \citep{bernardo79,bernardo05,bbs09}, which for one-dimensional parameters coincide with Jeffreys priors. Consider a family of probability distributions $P(x|\theta)$, $\theta\in\Theta$. A reference prior $\pi$ for $\theta$ maximizes expected Kullback-Leibler divergence of the prior from the posterior $\pi(\theta|x)$, that is, it maximizes
\[  \pi \mapsto E_{X\sim P}\big\{\text{\rm KL}(\pi(\cdot|X)\,|\,\pi)\big\} = E_{X\sim P}\int_\Theta\pi(\theta|X)\log\frac{\pi(\theta|X)}{\pi(\theta)}d\theta .  \]
Like the Jeffreys prior, reference priors are parameterization invariant, and unlike the I-prior is only suitable for low-dimensional parameters.

\subsection{Fisher kernels}	

\citet{jh98} introduced the {\em Fisher kernel}, defined for a broad range of models, which can be used with kernel methods, for example in support vector machines or as a covariance kernel in Gaussian process regression. Like I-prior, it is a method based on the Fisher information, but that is the only connection.
%It is very different from the Fisher information, but we discuss it briefly here to highlight the difference.
Suppose $P(x|\theta)$ is a probability function depending on a parameter $\theta\in\mR^p$. With $s_x(\theta)=\nabla_\theta\log P(x|\theta)$ the score vector for $\theta$ and $I[\theta]$ the Fisher information on $\theta$, the Fisher kernel is defined as
\[  K(x,x') = \langle s_x,s_{x'}\rangle_{\cF_n} = s_{x}(\theta)^\top I[\theta]^{-1}s_{x'}(\theta). \]

\section{Posterior distribution of the regression function under the \mbox{I-prior}}\label{sec-marg}

This section contains no new results, but is provided for convenience as the notation is different than for standard Gaussian process regression.  

%In this section we find it helpful for readability to switch to boldface notation for vectors and matrices. Hence the matrix $\Psi$ in~(\ref{err}) will be denoted by the boldface $\bPsi$.

%The I-prior for $f$ in model~(\ref{regr}) subject to~(\ref{err}) and~(\ref{rk}) is given by Theorem~\ref{th1}.
Denote $y=(y_1,\ldots,y_n)^\top$, $\bff=(f(x_1),\ldots,f(x_n))^\top$, $\bff_0=(f_0(x_1),\ldots,f_0(x_n))^\top$, $\varepsilon=(\varepsilon_1,\ldots,\varepsilon_n)^\top$, $w=(w_1,\ldots,w_n)^\top$ and let $H$ be the $n\times n$ matrix with $(i,j)$th coordinate $h(x_i,x_j)$.
Then~(\ref{regr}) implies $y = \bff + \varepsilon$.
Under the I-prior,
\[  \bff \sim \MVN(\bff_0,\lambda^2 H\Psi H). \]
%Let $\pi$ be a prior density for $f$ and denote $V_{\bff,\pi}$ the covariance matrix of $\bff$ under $\pi$.
The marginal distribution of $y$ then is 
\begin{eqnarray}\label{ydist}
y \sim \MVN(\bff_0,V_{y}),
\end{eqnarray}
where the marginal covariance is given as
\[  V_{y}=\lambda^2 H\Psi H+\Psi^{-1}. \] 
Thus, the marginal log likelihood of $(\lambda,\Psi)$ is
\begin{equation}\label{marglik}  
L(\lambda,\Psi|y) =  -\frac{n}{2}\log(2\uppi) - \frac12\log|V_y| - \frac12(y-\bff_0)^\top V_y^{-1}(y-\bff_0). 
\end{equation}
The maximum likelihood (ML) estimate $(\hat\lambda,\hat\Psi)$ of $(\lambda,\Psi)$ maximizes $L(\lambda,\Psi|y)$, and its asymptotic distribution  can be found from the Fisher information. In particular, assume $\lambda=\lambda(\theta)$ and $\Psi=\Psi(\theta)$ are sufficiently smooth functions of $\theta$. Then straightforward  calculations give the well-known result that the Fisher information matrix $U$ for $\theta$ has $(i,j)$th coordinate
\[ u_{ij} = \frac12\tr\Big(V_y^{-1}\frac{\partial V_y}{\partial\theta_i}V_y^{-1}\frac{\partial V_y}{\partial\theta_j} \Big), \]
where the derivatives are applied to each coordinate of the matrix.
Now under suitable asymptotic conditions on $V_{y,\pi}$, $\sqrt{n}(\hat\theta-\theta)$ has an asymptotic multivariate normal distribution with mean zero and covariance matrix $U^{-1}$.

The next lemma gives the posterior distribution of $f$ under the I-prior.
\begin{lemma}\label{lem-post}
	The posterior distribution of $f$ in~(\ref{regr}) subject to~(\ref{err}) given $(y_1,\ldots,y_n)$ under the I-prior $\pi$ is Gaussian with mean given by
	\[  E_\pi\big[f(x)|y_1,\ldots,y_n\big] = f_0(x) + \lambda\sum_{i=1}^n h(x,x_i)\hat w_i  \]
	where	%\begin{align*}
	$\hat w    = \lambda\Psi H ^\top V_y^{-1}(y-\bff_0)$,
	%\end{align*}
	and covariance kernel given by
	\begin{align*}
	\cov_{\pi}\big(f(x),f(x')|y_1,\ldots,y_n\big) &= \lambda^2\sum_{i=1}^n\sum_{j=1}^n h(x,x_i)h(x',x_j)(V_{y}^{-1})_{ij}
	\end{align*}.
\end{lemma}

%\bigskip

\noindent\begin{proof}[Proof of Lemma~\ref{lem-post}]
	Under the I-prior $\pi$,~(\ref{repr}) holds and the joint distribution of $(w,y)$ is given by
	\[
	\binom{w}{y} \sim \MVN\left[\binom{0}{\bff_0} ,\left(\begin{array}{cc}\Psi & \lambda\Psi H ^\top \\ \lambda H\Psi & V_y\end{array}\right) \right].
	\]
	From this standard results give the posterior distribution of $w$ given $y$, i.e, the conditional distribution of $w$ given $y$, which is multivariate normal with mean $\hat w$
	and covariance matrix
	\begin{eqnarray}\label{tildew}
	\tilde V_w = \Psi - \lambda^2\Psi H ^\top V_y^{-1}H \Psi = V_y^{-1} ,
	\end{eqnarray}
	where the last equality is the Woodbury matrix identity.
	The posterior mean of $f$ is now obtained by substituting each $w_i$ in~(\ref{repr}) by $\tilde w_i$, and the posterior covariance matrix is as in the lemma.
\end{proof}

It follows from the lemma that given the I-prior, the posterior of $f$ can be represented by the left part of~(\ref{repr}) where $(w_1,\ldots,w_n)^\top$ is multivariate normal with mean $\tilde w$ and covariance matrix $V_y^{-1}$.
The computational complexity of computing the posterior distribution is $O(n^3)$, the same as in Gaussian process regression.
This can be reduced in very specific cases, such as for parametric (i.e., finite dimensional) models or for one dimensional smoothing via the Reinsch algorithm \cite[Section~2.3.3]{gs94}.
A number of approximation methods to overcome this computational problem is listed in Chapter~8 of \citet{rw06}.

\section{I-priors and FBM RKHSs}\label{sec-fbm}

The main aim of this section is to describe smoothness properties of functions in the FBM RKHS and those of I-prior paths when the regression function is assumed to be in the FBM RKHS. 
The results on smoothness of Gaussian process paths and functions in the associated RKHSs given in this section are well-known, but explicit references may be difficult to find. However, very general related results can be found in \citet{steinwart18}.

In Section~\ref{sec-fbm1}, the FBM RKHS with Hurst coefficient $\gamma$ is defined, and in Section~\ref{sec-cent} centering of an RKHS is defined.
Section~\ref{sec-hol} concerns H\"older smoothness, and the main results are that functions in the FBM RKHS with Hurst coefficient $\gamma$ are H\"older of order $\gamma$, while I-prior paths are H\"older of order $2\gamma$. This can be compared with FMB-$\gamma$ process paths, which are H\"older of any order less than $\gamma$. 
Section~\ref{sec-reg} concerns a different concept of smoothness, called regularity, which is based on the rate of decay of Karhunen-Loeve coefficients. 
This concept is perhaps most useful for one-dimensional functions. Differently from H\"older smoothness, regularity shows a {\em gap} in smoothness between FBM process paths and functions in the FBM RKHS (see also Figure~\ref{fig-pathsreg}).
The results of Sections~\ref{sec-hol} and~\ref{sec-reg} are summarized in Table~\ref{tbl-smooth}.
In Section~\ref{ex-smooth2}, we look in some detail at smoothing with a one-dimensional Brownian motion RKHS. In this case, the I-prior methodology with i.i.d.\  errors gives similar results as cubic spline smoothing, but with a different theoretical justification. 
%We also consider autoregressive and moving average errors. 

\begin{table}
	\begin{tabular}{lll}
		Type of function & H\"older degree & Regularity (one dimensional case)  \\ \hline
		FBM-$\gamma$ process paths  & Any $<\gamma$    & $\gamma$   \\
		FBM-$\gamma$ RKHS functions & $\ge\gamma$               & $>\gamma+1/2$ \\
		FBM-$\gamma$ I-prior paths  & $2\gamma$              & $2\gamma+1$ (asymptotically if errors i.i.d.)
	\end{tabular}
	\caption{Smoothness of functions related to the FBM-$\gamma$ kernel. It is seen that all functions in an FBM RKHS are smoother than the corresponding FBM process paths, while the RKHS contains both rougher and smoother functions than I-prior paths. Note that, with probability 1, the FBM RKHS does not contain an FBM path but does contain an I-prior path.}
	\label{tbl-smooth}
\end{table}

\subsection{Canonical and Fractional Brownian motion RKHS}\label{sec-fbm1}

In this paper we consider two (families of) RKHSs of functions over a Hilbert space $\cX$ equipped with the inner product $\inn_\cX$. 

Firstly, the {\em canonical RKHS} is the dual space of $\cX$ and is defined by the canonical kernel
\[  h(x,x') = \langle x,x'\rangle_\cX. \]
Being the dual space, it consists of all linear functions over $\cX$. The Riesz representation theorem implies that for any linear function $f$ over $\cX$ there exists a $\beta\in\cX$ such that $f(x)=\langle x,\beta\rangle_\cX$. In that case, $\norm{f}_\cF=\norm{\beta}_\cX$. 

Secondly, we consider the Fractional Brownian Motion (FBM) RKHS.
\citet{schoenberg37} has shown that, for $0<\gamma<1$, there exists a Hilbert space $\cB$ and a function $\phi_\gamma:\cX\rightarrow\cB$ such that
\begin{eqnarray}\label{emb}
\norm{\phi_{\gamma}(x)-\phi_{\gamma}(x')}_\cB = \norm{x-x'}_\cX^{\gamma} \hspace{10mm}\forall x,x'\in\cX.
\end{eqnarray}
Using the polarization identity, we obtain
\begin{eqnarray}\label{fbm}
h_{\gamma}(x,x') = \langle\phi_\gamma(x),\phi_\gamma(x')\rangle_\cB = -\frac12\Big(\norm{x-x'}_\cX^{2\gamma}-\norm{x}_\cX^{2\gamma}-\norm{x'}_\cX^{2\gamma}\Big).
\end{eqnarray}
From its construction, it is clear that $h_\gamma$ is positive definite. It is in fact the covariance kernel of fractional Brownian motion (\fbm) on $\cX$ with Hurst coefficient~$\gamma$ \citep{kolmogorov40wiener,mn68}.
Note that if $\gamma=1$ then $h_\gamma(x,x')$ is the canonical kernel $\langle x,x'\rangle_\cX$.
Following \citet{cohen02}, we call the RKHS with kernel $h_\gamma$ the \fbm\ RKHS of order $\gamma$, and we denote it $\cF_\gamma$. An alternative name is the Cameron-Martin space of \fbm\ \citep[see, e.g.][]{picard11}.
%Kernel~(\ref{fbm}) and more general kernels were also studied by \citet{gangolli67}.

\subsection{Centering of an RKHS}\label{sec-cent}

The functions in an RKHS may be arbitrarily positioned, for example, if $f$ is in a canonical or in an FBM RKHS, then $f(0)=0$, which is undesirable for the purposes of this paper. To remedy this, an RKHS may be centered. The functions in a centered RKHS have zero mean. 

If $P$ is a probability distribution over $\cX$ and $X,X'\sim P$ are independent, a kernel $h$ over $\cX$ may be centered as follows:
\[  h_\text{cent}(x,x') = E_P(h(x,x')-h(x,X)-h(x',X')+h(X,X')\big).  \]
The RKHS with kernel $h_\text{cent}$ is then centered in the sense that $E_P(f(X))=0$ for all functions $f$ in the RKHS.

In the present paper we center with respect to the empirical distribution of data $x_1,\ldots,x_n$, so that in the centered RKHS, $\sum_{i=1}^nf(x_i)=0$.
The centered canonical kernel then becomes $h_\text{cent}(x,x')=\langle x-\bar x,x-\bar x\rangle_\cX$ where $\bar x=n^{-1}\sum_{i=1}^nx_i$. The centered FBM kernel becomes
\begin{eqnarray}\label{smooth}
h_{\gamma,P}(x,x') =  -\frac1{2n^2}\sum_{i=1}^n\sum_{j=1}^n\big(\norm{x-x'}_\cX^{2\gamma}-\norm{x-x_i}_\cX^{2\gamma}-\norm{x'-x_j}_\cX^{2\gamma}+\norm{x_i-x_j}_\cX^{2\gamma}\big).
\end{eqnarray}

%We now describe smoothness properties of the functions in the FBM RKHS.

\subsection{H\"older smoothness}\label{sec-hol}

A function $f$ over a set $\cX$ with norm $\norm{\cdot}_\cX$ is H\"older of order $0<\gamma\le 1$ if there exists a $C>0$ such that
\begin{eqnarray}\label{holder}
\abs{f(x) - f(x')}  < C\norm{x-x'}_\cX^\gamma \quad \forall x,x'\in\cX
\end{eqnarray}
and $f$ is H\"older of order $1<\gamma\le 2$ if
%\[   \left|\nabla_{x-x'}[f(x)-f(x')]\right| \le C\lVert x-x'\rVert _\cX^\gamma  \quad\forall x,x'\in\cX \]
\begin{align}\label{holder2}
\left|f(x+t)-2f(x)+f(x-t)\right| \le K\norm{t}_\cX^\gamma  \quad\forall x,t\in\cX,
\end{align}
for some $K>0$ (see \citeauthor{gt98}, \citeyear{gt98}, Chapter 4, or \citeauthor{stein70}, \citeyear{stein70}, Section 4.3).
%A function is {\em locally} H\"older if it is H\"older merely over compact subsets.

It is well-known that realizations of an FBM-$\gamma$ process are a.s.\ H\"older continuous of any order less than $\gamma$ (e.g., Theorem 4.1.1 in \citeauthor{em02}, \citeyear{em02}). The next lemma shows that functions in the FBM RKHS are strictly smoother than FBM realizations.

\begin{lemma}\label{hold1}
	The functions in the FBM-$\gamma$ RKHS are H\"older of order $\gamma$. 
\end{lemma}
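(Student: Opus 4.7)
The proof is a direct application of the reproducing property together with Cauchy--Schwarz, and the key identity is already baked into the construction of $h_\gamma$ via Schoenberg's embedding~(\ref{emb}).

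First I would fix $f \in \cF_\gamma$ and arbitrary points $x,x' \in \cX$, and use the reproducing property of $h_\gamma$ to write
\[
  f(x) - f(x') = \langle f,\, h_\gamma(x,\cdot) - h_\gamma(x',\cdot)\rangle_{\cF_\gamma}.
\]
By Cauchy--Schwarz, this gives
\[
  |f(x) - f(x')| \le \|f\|_{\cF_\gamma}\,\bigl\|h_\gamma(x,\cdot) - h_\gamma(x',\cdot)\bigr\|_{\cF_\gamma}.
\]
So the whole problem reduces to controlling the RKHS norm of $h_\gamma(x,\cdot) - h_\gamma(x',\cdot)$.

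Next I would compute that norm explicitly. Expanding the square and repeatedly applying the reproducing property yields
\[
  \bigl\|h_\gamma(x,\cdot) - h_\gamma(x',\cdot)\bigr\|_{\cF_\gamma}^2 = h_\gamma(x,x) - 2h_\gamma(x,x') + h_\gamma(x',x').
\]
From the definition~(\ref{fbm}) one reads off $h_\gamma(x,x) = \|x\|_\cX^{2\gamma}$ and $h_\gamma(x',x') = \|x'\|_\cX^{2\gamma}$, while $-2h_\gamma(x,x') = \|x-x'\|_\cX^{2\gamma} - \|x\|_\cX^{2\gamma} - \|x'\|_\cX^{2\gamma}$. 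Summing, the $\|x\|_\cX^{2\gamma}$ and $\|x'\|_\cX^{2\gamma}$ terms cancel and we are left with $\|x-x'\|_\cX^{2\gamma}$. (This is, of course, just the RKHS incarnation of Schoenberg's isometry~(\ref{emb}) after identifying $\phi_\gamma(x)$ with $h_\gamma(x,\cdot)$.)

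Combining the two displays gives
\[
  |f(x) - f(x')| \le \|f\|_{\cF_\gamma}\,\|x-x'\|_\cX^{\gamma},
\]
which is precisely~(\ref{holder}) with constant $C = \|f\|_{\cF_\gamma}$, establishing H\"older continuity of order $\gamma$. Since the argument is a two-line calculation once the reproducing property is invoked, there is no real obstacle; the only thing worth noting is that the same proof works verbatim for the centered kernel~(\ref{smooth}) because the additional terms in the centering are themselves of the form $\|x-x_i\|_\cX^{2\gamma}$ and cancel in the difference $h_{\gamma,P}(x,\cdot)-h_{\gamma,P}(x',\cdot)$ up to a factor that still contributes $\|x-x'\|_\cX^{2\gamma}$ to the squared RKHS norm, so the same H\"older bound holds.
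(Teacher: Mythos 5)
Your proof is correct and is essentially the paper's own argument: reproducing property, Cauchy--Schwarz, and the identity $\norm{h_\gamma(x,\cdot)-h_\gamma(x',\cdot)}_{\cF_\gamma}=\norm{x-x'}_\cX^\gamma$, which the paper obtains by citing the Schoenberg isometry~(\ref{emb}) with $\phi_\gamma(x)=h_\gamma(x,\cdot)$ and you verify by direct expansion of the kernel. The added remark that the same bound holds for the centered kernel~(\ref{smooth}) is a correct (and worthwhile) observation not made in the paper's proof.
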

\begin{proof}[Proof of Lemma~\ref{hold1}]
	Let $f\in\cF_\gamma$. By the reproducing property of $h_\gamma$, $f(x)=\langle h_\gamma(x,\cdot),f\rangle_{\cF_\gamma}$ for $f\in\cF_\gamma$. From this, the Cauchy-Schwarz inequality and~(\ref{emb}) with $\phi_\gamma(x)=h_\gamma(x,\cdot)$,
	\begin{equation}\label{holdproof}
	\begin{split}
	\lefteqn{\abs{f(x)-f(x')}
		= \abs{\langle h_\gamma(x,\cdot)-h_\gamma(x',\cdot),f\rangle_{\cF_\gamma}}} \\
	&\quad\quad\le \norm{h_\gamma(x,\cdot)-h_\gamma(x',\cdot)}_{\cF_\gamma} \,\norm{f}_{\cF_\gamma} = \norm{x-x'}_\cX^\gamma\,\norm{f}_{\cF_\gamma},
	\end{split}
	\end{equation}
	proving the lemma.
\end{proof}

The next example illustrates Lemma~\ref{hold1} is sharp.
\begin{example}\label{bm}\rm
	If $\cX=[0,1]$, the FBM-1/2 RKHS consists of all functions $f$ that are absolutely continuous possessing a weak derivative $\dot f$ and satisfying $f(0)=0$. The norm is given by
	\begin{eqnarray*}%\label{smoothnorm}
		\norm{f}_\cF^2 = \norm[\big]{\dot f}_{\LL^2([0,1])}^2 = \int_{[0,1]} \dot f(x)^2dx \hspace{6mm}\forall f\in\cF.
	\end{eqnarray*}
	%Noting that the kernel reduces to $h(x,x')=\min(x,x')$, see
	See \citet{vz08rep}, Section~10, for a proof.
	It is now straightforward to verify that the function $f$ over $[0,1]$ defined by $f(x)=\sqrt{1-x^2}$ is in $\cF_{1/2}$. This function is H\"older of order $1/2$ but not more.
\end{example}

By~(\ref{repr}), I-prior realizations are finite combinations of basis functions $h_\gamma(x_i,\cdot)$. Hence Lemma~\ref{lem-hol1} in \ref{app-hold} directly implies the following:
\begin{lemma}
	If $\cF$ in~(\ref{regr}) subject to~(\ref{err}) is the FBM-$\gamma$ RKHS, the I-prior has realizations that are H\"older of order $2\gamma$. 
\end{lemma}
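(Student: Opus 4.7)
The plan is to use the I-prior representation to reduce the sample-path H\"older estimate to a pointwise regularity estimate on a single basis function, which is the content of the deferred Lemma~\ref{lem-hol1}. By~(\ref{repr}), every I-prior realization has the form $f_0+\lambda\sum_{i=1}^n h_\gamma(\cdot,x_i)w_i$ with finitely many (Gaussian) weights $w_i$, so up to the fixed prior mean $f_0$ it is a finite linear combination of the basis functions $h_\gamma(\cdot,x_i)$. Since both the first-order H\"older condition~(\ref{holder}) and its second-order Zygmund-type form~(\ref{holder2}) are preserved under finite linear combinations, it suffices to prove that each basis function $h_\gamma(\cdot,x_i)$ is H\"older of order $2\gamma$; and by the explicit form~(\ref{fbm}) of the FBM kernel this reduces further to the same statement for the auxiliary map $\phi_a(x):=\|x-a\|_\cX^{2\gamma}$ with arbitrary fixed $a\in\cX$.

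For the regime $0<2\gamma\le 1$, the elementary scalar inequality $|s^{2\gamma}-t^{2\gamma}|\le|s-t|^{2\gamma}$ (for $s,t\ge 0$), combined with the reverse triangle inequality $\bigl|\|x-a\|_\cX-\|x'-a\|_\cX\bigr|\le\|x-x'\|_\cX$, immediately gives $|\phi_a(x)-\phi_a(x')|\le\|x-x'\|_\cX^{2\gamma}$, which is~(\ref{holder}) at order $2\gamma$. For the regime $1<2\gamma\le 2$ I would instead verify the symmetric second-difference bound~(\ref{holder2}): writing $y=x-a$, one must show that $\bigl|\|y+t\|_\cX^{2\gamma}+\|y-t\|_\cX^{2\gamma}-2\|y\|_\cX^{2\gamma}\bigr|\le K\|t\|_\cX^{2\gamma}$. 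This follows from a dichotomy on $\|y\|_\cX/\|t\|_\cX$. When $\|y\|_\cX\le 2\|t\|_\cX$ all three terms are $O(\|t\|_\cX^{2\gamma})$ by the ordinary triangle inequality. When $\|y\|_\cX>2\|t\|_\cX$, a second-order Taylor expansion of $\phi_a$ in the two-dimensional subspace spanned by $y$ and $t$ causes the first-order terms to cancel by symmetry and leaves a remainder of order $\|y\|_\cX^{2\gamma-2}\|t\|_\cX^{2}$; since $2\gamma-2\le 0$, the bound $\|y\|_\cX^{2\gamma-2}\le(2\|t\|_\cX)^{2\gamma-2}$ again yields $O(\|t\|_\cX^{2\gamma})$.

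The main obstacle is exactly this second regime: the Hessian of $\|\cdot\|_\cX^{2\gamma}$ is singular at the origin whenever $2\gamma<2$, so no single global Taylor bound on $\phi_a$ is available. The case-split on $\|y\|_\cX$ versus $\|t\|_\cX$ is the device that matches the singular-at-zero and smooth-away-from-zero regimes to the exponent $2\gamma$, and obtaining a constant $K$ uniform in $a$ is essentially the only non-routine point. Once this pointwise H\"older estimate on $\phi_a$ is in place as Lemma~\ref{lem-hol1}, the reduction in the first paragraph immediately delivers the claim for arbitrary I-prior realizations.
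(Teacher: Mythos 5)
Your proposal is correct, and its skeleton coincides with the paper's: reduce via the representation~(\ref{repr}) to the single basis functions $h_\gamma(x_0,\cdot)$, reduce those via~(\ref{fbm}) to the radial map $x\mapsto\norm{x-a}_\cX^{2\gamma}$, and split into the regimes $2\gamma\le 1$ (first differences) and $1<2\gamma\le 2$ (symmetric second differences). The first regime is handled identically in both (subadditivity of $u\mapsto u^{2\gamma}$ plus the reverse triangle inequality is exactly the paper's Lemma~\ref{lem-tri}). Where you genuinely diverge is the second regime: the paper proves a parallelogram inequality $\norm{x+x'}^{2\gamma}+\norm{x-x'}^{2\gamma}\le 2\norm{x}^{2\gamma}+2\norm{x'}^{2\gamma}$ purely geometrically, by pushing the configuration through the Schoenberg embedding $\phi_\gamma$ of~(\ref{emb}) and applying the exact parallelogram law there; since the second difference $\norm{y+t}^{2\gamma}+\norm{y-t}^{2\gamma}-2\norm{y}^{2\gamma}$ is nonnegative (convexity of $u\mapsto u^{2\gamma}$ and the triangle inequality), this one-sided bound settles the Zygmund condition with the clean constant $2$. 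Your route --- a dichotomy on $\norm{y}_\cX$ versus $\norm{t}_\cX$, with the crude triangle-inequality bound near the singularity and a second-order Taylor remainder of size $\norm{y}^{2\gamma-2}\norm{t}^2$ away from it --- is the standard real-analysis argument; it checks out (the Hessian of $\norm{\cdot}^{2\gamma}$ has operator norm $O(\norm{y}^{2\gamma-2})$, and $\norm{y\pm st}\ge\norm{y}/2$ on the good region), and it has the advantage of not relying on the Hilbert-space embedding, so it would extend to other radial kernels. Two small remarks: the uniformity of $K$ in $a$ that you flag as the only non-routine point is in fact automatic, since $\phi_a(x)=\phi_0(x-a)$ and the second-difference bound is translation invariant; and you pay for the generality with non-sharp constants, where the paper's embedding argument gives them for free.
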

%\noindent Recall that FBM-$\gamma$ process realizations are only H\"older of any order less than $\gamma$, so I-prior realizations are much smoother. 

\subsection{Regularity}\label{sec-reg}

As seen above, in terms of H\"older smoothness, FBM paths and functions in the corresponding FBM RKHS may differ by an infinitesimally small amount. If we look at a different concept of smoothness, it turns out there is a gap between the two.

Let $\cF$ be a Hilbert space of functions over a set $\cX$ with orthonormal basis $\{g_i\}$.
For $\beta>0$ and $f\in\cF$, consider the squared norm
\begin{align}\label{sobloss}  \norm{f}_\beta^2 = \frac12\sum_{i=1}^\infty f_i^2i^{2\beta}, \end{align}
where $f_i=\langle f,g_i\rangle_{\cF}$.
A function $f$ for which $\norm{f}_\beta<\infty$ is said to have {\em regularity} $\beta$ relative to the basis $\{g_i\}$. 
We have:
\begin{lemma}\label{lem-reg}
	The FBM-$\gamma$ RKHS over $[0,1]$ is regular of order $1/2+\gamma$ relative to the Karhunen-Loeve basis for the FBM-$\gamma$ process. 
\end{lemma}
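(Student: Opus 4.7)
The plan is to identify $\cF_\gamma$ with a weighted sequence space via the Karhunen--Lo\`eve (K--L) expansion of the FBM kernel and then match the weights against the definition of regularity. Let $T_\gamma:L^2([0,1])\to L^2([0,1])$ denote the integral operator with kernel $h_\gamma$, let $\{e_i\}_{i\ge 1}$ be its $L^2$-orthonormal eigenfunctions, and let $\lambda_1\ge\lambda_2\ge\cdots>0$ be the corresponding eigenvalues, so that Mercer's theorem gives the K--L expansion $h_\gamma(x,x')=\sum_i\lambda_i e_i(x)e_i(x')$. The standard Moore--Aronszajn characterization of the RKHS of a Mercer kernel then identifies $\cF_\gamma$ with the weighted sequence space $\{\,f=\sum_i c_i e_i\in L^2:\sum_i c_i^2/\lambda_i<\infty\,\}$, equipped with $\norm{f}_{\cF_\gamma}^2=\sum_i c_i^2/\lambda_i$, where the $c_i=\langle f,e_i\rangle_{L^2}$ are the K--L coefficients of $f$ (which, up to rescaling by $\sqrt{\lambda_i}$, is the basis entering the regularity norm).

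The only analytic ingredient is the sharp two-sided asymptotic $\lambda_i\asymp i^{-(2\gamma+1)}$ as $i\to\infty$, with positive constants depending only on $\gamma$. For standard Brownian motion ($\gamma=1/2$) the eigenvalues are explicit, $\lambda_i=((i-\tfrac12)\pi)^{-2}$, so the asymptotic is immediate. For general $\gamma\in(0,1)$ this is a nontrivial spectral fact for the fractional Brownian covariance operator, which I would invoke from the literature (for instance Bronski, 2003, or Chigansky and Kleptsyna, 2018) rather than attempt to reprove.

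Given the eigenvalue bound, the identification collapses to a one-line comparison of weighted $\ell^2$ norms: for $f=\sum_i c_i e_i$,
\[
   \norm{f}_{\cF_\gamma}^2=\sum_i\frac{c_i^2}{\lambda_i}\;\asymp\;\sum_i c_i^2\, i^{2\gamma+1}=2\,\norm{f}_{1/2+\gamma}^2,
\]
so the RKHS norm and the regularity-$(1/2+\gamma)$ norm are equivalent. In particular every $f\in\cF_\gamma$ satisfies $\norm{f}_{1/2+\gamma}<\infty$, whereas for generic $f\in\cF_\gamma$ (e.g.\ with $c_i$ just barely on the $\cF_\gamma$ boundary) $\norm{f}_\beta=\infty$ as soon as $\beta>1/2+\gamma$. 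This is exactly the statement that $\cF_\gamma$ is regular of order $1/2+\gamma$ relative to the K--L basis.

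The main, and essentially only, obstacle is the sharp eigenvalue asymptotic $\lambda_i\asymp i^{-(2\gamma+1)}$, which for $\gamma\neq 1/2$ is a genuine spectral calculation; once it is quoted, the rest of the argument is elementary bookkeeping inside the K--L expansion. A minor additional subtlety, worth flagging but not a real obstacle, is that the centered FBM kernel $h_{\gamma,P}$ actually used in the paper is only a finite-rank perturbation of $h_\gamma$, so Weyl-type interlacing shows its eigenvalues obey the same asymptotic and the conclusion transfers without change.
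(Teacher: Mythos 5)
Your proof is correct and follows essentially the same route as the paper's: both quote Bronski's eigenvalue asymptotic $\lambda_i\sim i^{-1-2\gamma}$ and the standard characterization of the RKHS as the weighted sequence space $\{\sum_i f_i^2/\lambda_i<\infty\}$ in the Karhunen--Lo\`eve basis, then compare exponents. Your direct two-sided norm equivalence is in fact slightly cleaner than the paper's intermediate pointwise-decay step, and the remark about the centered kernel being a finite-rank perturbation is a worthwhile addition the paper omits.
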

\proof[Proof of Lemma~\ref{lem-reg}]
It follows from results in \citet{bronski03small} that the FBM-$\gamma$ kernel over $[0,1]$ has eigenvalues $\lambda_i\sim i^{-1-2\gamma}$. 
Now $f\in\cF_{\gamma}$ if and only if $\sum f_i^2/\lambda_i<\infty$ (e.g., Lemma~1.1.1 in \citet{wahba90}).
But then $f_i=o(i^{-1-\gamma})$ as $i\rightarrow\infty$ such that the sum converges.
It follows that $\norm{f}_\beta<\infty$ if and only if $\beta\le1/2+\gamma$.
\endproof

I-prior paths are finite dimensional and hence have infinite regularity. More interestingly, we can may consider asymptotic regularity.
A series of functions $a_1,a_2,\ldots$ can be defined to have asymptotic regularity $\beta$ if $\lim_{n\rightarrow\infty}\norm{a_n(x)}_\beta<\infty$.
From \citet{bronski03small}, with $\cX=[0,1]$, $h_\gamma$ has a Mercer expansion
$h_\gamma(x,x') = \sum_{i=1}^\infty \lambda_i g_i(x)g_i(x')$ 
where $\lambda_i\sim i^{-1-2\gamma}$ and the $g_i$ form the Karhunen-Loeve basis for FBM-$\gamma$. Hence I-prior paths can be written as
\[  f_n(x) = \sum_{j=1}^nh(x,x_j)w_j = \sum_{i=1}^\infty\lambda_iu_ig_i(x) \]
where $u_i=\sum_{j=1}^ng_i(x_j)w_j$. If the errors in~(\ref{regr}) are i.i.d.\  normal, the $w_i$ are i.i.d.\  normal under the I-prior.
Under some conditions on the $x_i$, and assuming the $g_i$ have a common bound (unfortunately we have no proof of this but experimental results support the assertion), we can verify that $\lim_{n\rightarrow\infty}\norm{f_n(x)/n}_\beta<\infty$ a.s.\ for $\beta\le 1+2\gamma$, i.e., I-prior paths multiplied by $1/n$ are a.s.\ asymptotically regular of order $1+2\gamma$. 

%As $n\rightarrow\infty$, I-prior paths when the regression function lies in the FBM-$\gamma$ RKHS over $[0,1]$ are regular of order $1+2\gamma$ relative to the Karhunen-Loeve basis for the FBM-$\gamma$ process. 

%\newpage
\section{Application to real data}\label{sec-applic}

In this section we apply the I-prior methodology to nine real data sets which have been extensively analyzed in the literature, and compare performance to published methods as a well as to Tikhonov regularization. We analyze one functional regression data set and eight classification problems, and obtain competitive performance of the I-prior methodology.

\subsection{Model assumptions and estimation of hyperparameters}\label{sec-estdetails}

\begin{figure}[tbp]
	\centering
	%	\subfigure[True regression function has regularity 1]
	\includegraphics[width=70mm]{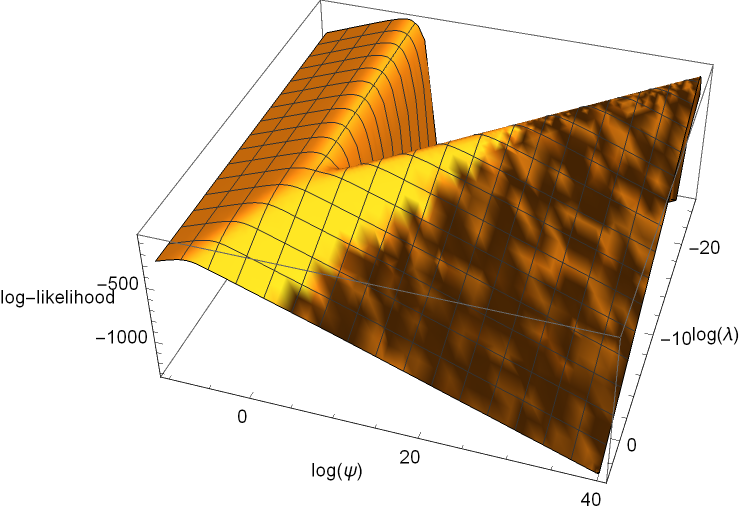}\hspace*{5mm}
	%		 \hspace{2mm}
	%		\subfigure[Random walk prior]
	\includegraphics[width=70mm]{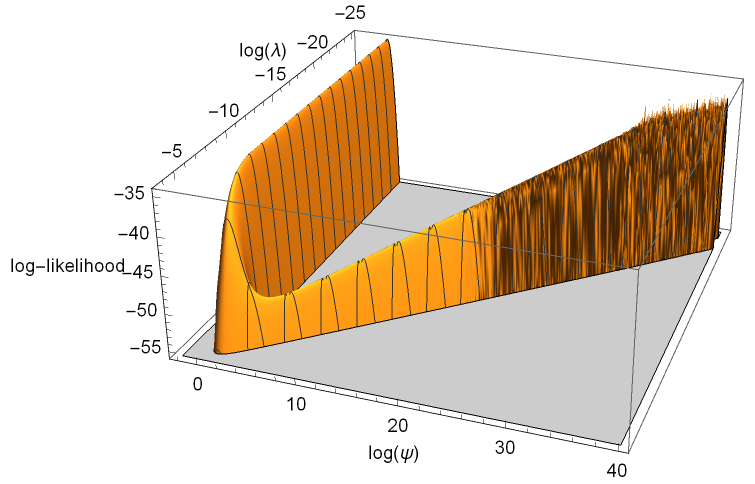}%\hspace*{5mm}
	\caption{Marginal log likelihood for Eye State data (global view on left and zoom of very top of graph on right). Darker areas reflect a nonsmooth surface due to numerical errors in evaluating the log-likelihood. There are local maxima on each of the two ridges, with the maximum on the diagonal ridge leading to the best predictive performance. It can be seen that the local maximum on the diagonal ridge is numerically hard to find, in fact, it is hard to establish what the global maximum is. Fortunately, predictive performance is near-identical anywhere on the diagonal ridge with say $\log(\psi)>6$, so for the purpose of prediction there is no need to find the actual global maximum.}
	\label{fig-marglik}
\end{figure}

For the real data examples below, we assumed model~(\ref{regr}) subject to i.i.d.\  $N(0,\psi^{-1})$ errors, and for the set of regression functions $\cF$ we used the canonical RKHS of linear functions and the FBM-$1/2$ RKHS. We also made some limited use of the FBM-$\gamma$ RKHS where the Hurst coefficient $\gamma$ was estimated, and the squared exponential RKHS~(\ref{sqexp}) with $\xi=1$ where $\sigma$ was estimated. 
For Tikhonov regularization, we only used the canonical and FBM-1/2 RKHSs, which leave just the scale parameter parameter $\lambda$ in~(\ref{reg}) to be estimated, which we did using generalized cross-validation.

For the I-prior methodology, $\lambda$, $\psi$, and possibly $\gamma$ or $\sigma$ were to be estimated. 
We first discuss estimation of $\lambda$ and $\psi$, which we did by maximizing the marginal likelihood~(\ref{marglik}) (we also tried minimizing various cross-validation criteria, but this gave worse performance). Maximum likelihood estimation was not straightforward for two reasons: the possible occurrence of multiple local maxima, and numerical difficulties in evaluating the likelihood. 
A typical situation is as pictured in Figure~\ref{fig-marglik}: the likelihood has two ridges, one parallel to the $\log(\lambda)$ axis, and one running diagonally across the graph. We found empirically that each ridge may have a local maximum, and there may be a local maximum on or near the cusp as well. Usually if there was a local maximum on the diagonal ridge, we were not able to find it because it was too difficult to numerically evaluate the likelihood. In some cases, particularly for the canonical kernel (e.g., for the Hill-Valley data below), we could not get a decent estimate of any part of the diagonal ridge. Fortunately, in most cases, it was only necessary to be able to estimate the part of the diagonal ridge near the cusp, as predictive performance did not noticeably change moving up the ridge. 
Some more details are given in the caption of Figure~\ref{fig-marglik}.
We selected the local maximum (or near-local-maximum on the diagonal ridge) that gave the smallest cross-validation error. 

As a result, in most cases we could not determine the value of the maximum marginal likelihood, and in these cases it was impossible to find the maximum likelihood estimator of the Hurst coefficient $\gamma$. Instead, for different values of $\gamma$ we estimated $\lambda$ and $\psi$ using maximum likelihood, and selected the value of  $\gamma$ which minimized cross-validation error. As this was quite time consuming, particularly due to the difficulty of finding the maximum likelihood estimators of $\lambda$ and $\psi$ for different values of $\gamma$, we omitted estimation of $\gamma$ from the simulations below.

Notwithstanding some philosophical advantages of a fully Bayes approach compared to an empirical Bayes one, it does not appear to be the case that a fully Bayes approach would alleviate the aforementioned problems. In a fully Bayes setting, we may expect the posterior to be multimodal, so the posterior mean would be an inadequate summary measure, and the posterior mode may be more appropriate. However, finding it may be computationally difficult using Monte Carlo methods. Furthermore, unlike the marginal likelihood, the posterior of $f$ cannot easily be visualized, and it may be difficult to find the true mode among multiple local modes. The numerical difficulties we encountered in evaluating the likelihood and the posterior (with hyperparameters substituted by their maximum likelihood estimators) will likely be replicated in the posterior in a fully Bayes approach.
%Finally, if hyperpriors are poorly chosen, we would worry about 

\subsection{Motivation for use of FBM RKHS}

As explained in more detail in Section~\ref{sec-fbm}, the use of the I-prior methodology is particularly attractive if $\cF$ is a fractional Brownian motion (FBM) RKHS over a Euclidean space (which has as a special case the aforementioned centered Brownian motion). FBM {\em process paths} are non-differentiable and, having H\"older smoothness ranging between 0 and 1, an FBM process prior for the regression function may be too rough for many applications. 
In contrast, functions in the FBM {\em RKHS} are (weakly) differentiable if the Hurst coefficient is at least 1/2 and have minimum H\"older smoothness ranging from 0 to 2. This wide range of smoothnesses make it an attractive general purpose function space for nonparametric regression. 
Another advantage is that it allows us to do multivariate smoothing with just one or two parameters to be estimated: either only the scale parameter $\lambda$, while using a default setting of, say, 1/2 for the Hurst coefficient, or both the scale parameter and the Hurst coefficient. This is in contrast with standard kernel based smoothing methods, which require a scale parameter and at least one kernel hyperparameter to be estimated. For example, if we use the exponential kernel
\begin{equation}\label{sqexp}
r(x,x') = \exp\Big(-\frac{\norm{x-x'}^{2\xi}}{2\sigma^2}\Big), 
\end{equation}
the scale parameter $\lambda$, the smoothness parameter $\xi$ (somewhat analogous to the Hurst coefficient), and a `variance' parameter $\sigma^2$ need to be estimated. Default settings $\xi=1$ or $\xi=2$ could be used to reduce the number of free parameters to two. 

Besides smoothing with FBM kernels, the only other smoothing method that we are aware of that requires only a single hyperparameter to be estimated is thin plate spline smoothing. However, for larger dimensions, thin plate splines seem to be harder to interpret and implement.

\subsection{Regression with a functional covariate}

\begin{figure}[tbp]
	\centering
	{\includegraphics[width=80mm]{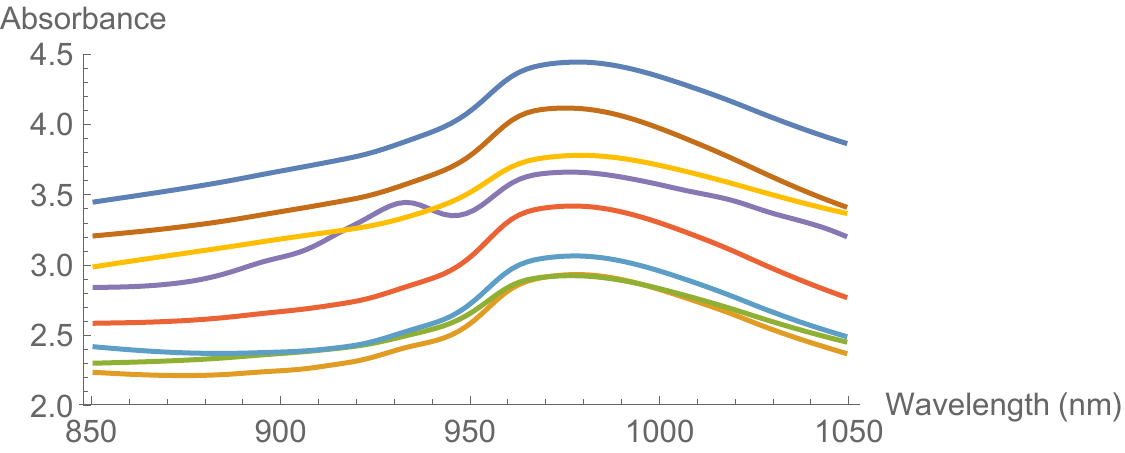}}
	\caption{Sample of spectrometric curves used to predict fat content of meat}\label{fig-tecator}
\end{figure}

We illustrate the prediction of a real valued response when one of the covariates is a function using a widely analysed data set used for quality control in the food industry.
The data consist of measurements on a sample of $215$ pieces of finely chopped meat. The response variable is fat content, and the covariate is light absorbance for 100 different wavelengths.
The absorbance curve can be considered a `functional' variable (see a sample of such curves plotted in Figure~\ref{fig-tecator}).
For more details see http://lib.stat.cmu.edu/datasets/tecator and \citet{thodberg96}. %\citeA[Section 2.1]{fv06}
Our aim is to predict fat content from the 100 measurements of absorbance.
The first 172 observations in the data set are used as a training sample, and the remaining 43 observations are used as a test sample (following Thodberg's original recommendation).

Many different methods have been applied in the literature to the data set, estimating a model using the training sample and evaluating its performance using the test sample.
One of the best results was achieved early on by~\citet{thodberg96}, who used neural networks on the first 10 principal components and achieved a test mean squared error of $0.36$.
The best test error performance we found was by \citet{vw00} who achieved an error rate of $0.34$, also using neural networks on the principal components.
More recently various other statistical models have been tried on the data set, see  Table~\ref{tbl-spect} for a summary. In spite of their lesser performance compared to neural networks, the interest of these methods is that they do not rely on an a priori data reduction in terms of the main principal components.

\begin{table}[t]
	\centering
	\small
	\begin{tabular}{lrr}\hline
		
		Method                                         & \multicolumn{2}{c}{RMSE} \\
		& Training & Test \\ \hline
		Global constant model                          & 12.50 & 13.3 \\ \hline
		Neural network \citep{vw00}                     &  & 0.34 \\
		Kernel smoothing \cite[Section 7.2]{fv06}      &  &  1.85 \\
		Double index model \citep{chm11}                &  & 1.58 \\
		Single index model \citep{gv14}                 &  & 1.18 \\
		Sliced inverse regression \citep{ll14}          &  & $0.90$ \\ 
		MARS \citep{zyh14}                              &  & 0.88 \\
		Partial least squares \citep{zyh14}             &  & 1.01 \\
		CSEFAM \citep{zyh14}                            &  & 0.85 \\
		\hline
		Tikhonov regularization (linear)               & 3.32 & 3.54 \\
		Tikhonov regularization (FBM-1/2 kernel)       & 4.32 & 4.54 \\ \hline
		I-prior (linear)                                & 2.82 & 3.15 \\
		I-prior (FBM RKHS with $\gamma=0.5    $)                & 0.00 & 0.67 \\
		I-prior (FBM RKHS with $\hat\gamma=0.98$)               & 0.00 & 0.57 \\
		I-prior (squared exponential RKHS, $\hat\sigma=0.0079$) & 0.35 & 0.58 \\
		\hline
	\end{tabular}
	\caption{RMSEs for predicting fat content from spectrometric functional covariate (see Figure~\ref{fig-tecator}): previously published results, Tikhonov regularization, and I-prior methodology.
	}
	\label{tbl-spect}
\end{table}

The $i$th spectral curve is denoted $x_i$, with $x_i(t)$ denoting the absorbance for wavelength~$t$. To be able to estimate a linear or smooth effect using the canonical or FBM RKHSs, an appropriate inner product for the $x_i$ needs to be found. From Figure~\ref{fig-tecator} it appears the curves are differentiable, and it seems reasonable to assume the $x_i$ lie in a Sobolev-Hilbert space $\cX$ with inner product
\[  \langle x,x'\rangle_\cX^2 = \int \dot x(t)\dot x'(t) dt. \]

A linear effect of the spectral curve on fat content can be modelled using the canonical RKHS over $\cX$. We see in Table~\ref{tbl-spect} that both Tikhonov regularization and the I-prior give a poor performance, with test RMSEs of 3.54 and 2.89, respectively. Next we fitted a smooth dependence of fat content on spectrometric curve using the \fbm\ RKHS. As seen in the table, Tikhonov regularization performs very poorly. We tried various values of the Hurst coefficient, but all give worse results than the linear model.
On the other hand, the I-prior performs rather well for different RKHSs, including the FBM and the squared exponential ones. We had some convergence problems so could not get the ML estimator of $\gamma$, the Hurst coefficient for the FBM RKHS, so instead estimated it by minimizing the cross-validation error (10-fold cross-validation gave $\hat\gamma=0.98$). For the squared exponential RKHS we did manage to find the ML estimator $\hat\sigma$ of $\sigma$, and it is given in Table~\ref{tbl-spect}.

Instead of fat content, protein content can be predicted from the spectral curve.
With the I-prior based on a smooth dependence of protein content on the spectral curve we obtained an RMSE of 0.52, using a local (non-global) maximum likelihood estimate of the Hurst coefficient, $\hat\gamma=0.997$. This improves on \citet{zyh14} who obtained an RMSE of 0.85.

\subsection{Classification}

We now apply the I-prior methodology to classification problems, assuming model~(\ref{regr}) with $y_i\in\{0,1\}$ denoting the class label of observation $i$, and $x_i\in\mR^p$ a $p$-dimensional covariate. A newly observed unit $n+1$ with covariate value $x_{n+1}$ is classified into class $0$ if $\hat f(x_{n+1})<0.5$ and into class 1 if $\hat f(x_{n+1})>0.5$.

An extensive analysis of eight data sets with sixteen different methods has recently been done by \citet{cs17}. The methods are the following: linear and quadratic discriminant analysis (LDA and QDA), $k$ nearest neighbours ($k$nn), \citeauthor{cs17}'s random projection version of these methods (RP-LDA, RP-QDA and RP-$k$nn), a single projection version of LDA and $k$nn, random forests (RF, \citeauthor{breiman01}, citeyear{breiman01}), support vector machines (SVMs) with linear and radial kernels, Gaussian process regression with a radial kernel, penalized LDA \citep{wt11}, nearest shrunken centroids \citep{thnc03}, $L_1$ penalized logistic regression \citep{gmc15}, optimal tree ensembles \citep{khan16}, and an ensemble of a subset of $k$nn classifiers \citep{gul16}.

For each data set, random subsamples of sizes between 50 and 1000 were taken, and for each subsample the model was fitted and model based predicted class labels of the remaining data were computed. The number of random random subsamples ranged between 40 and 1000, and the average misclassification percentage for the predictions was computed as well as corresponding standard errors. 

In Table~\ref{tbl-class}, for each data set the best results provided in \citet{cs17} are reproduced, along with results for the I-prior methodology and Tikhonov regularization based on the canonical and FBM-1/2 RKHSs. 
For the Hill-Valley and Mice data, we also included results for the FBM-0.9 RKHS, which dramatically reduced misclassification rates. For most subsamples, $\hat\gamma=0.9$ approximately minimized cross-validation error (it is coincidental that the number is the same for both data sets). For all datasets, initial analyses indicated further improvements of results could be obtained by estimating $\gamma$ rather than using $\gamma=1/2$ or $\gamma=0.9$, but this was too time consuming to carry out. 
For Tikhonov regularization the scale parameter was estimated using generalized cross-validation, and for the I-prior methodology hyperparameters were estimated using a modified maximum likelihood approach (see Section~\ref{sec-estdetails}).
For five out of eight data sets (Eye state, Mice, Hill-Valley, Musk, and Activity recognition), the I-prior methodology gives better results than the best method reported by \citeauthor{cs17}, and for the Gisette data there is a tie for first place with Linear SVMs. For six out of the eight data sets, the I-prior methodology improved on the random projection ensemble results of \citet{cs17}, while only for the Ionosphere data did an ensemble method perform better; however, it is possible that random projection ensembles can further improve the I-prior methodology.
Furthermore, in most instances the I-prior methodology gives better results then Tikhonov regularization, often by a large margin. Part of the reason for this may be that for Tikhonov regularization, only a single parameter is estimated (the scale parameter $\lambda$), while for the I-prior methodology two are estimated ($\lambda$ and $\psi$), giving more flexibility to adapt to the data.

We did not show results for GP regression with the FBM-1/2 and canonical kernels, which gave results comparable to the I-prior methodology, sometimes better, sometimes worse. 
%The two methods are based on different philosophies, and there is no a priori reason that one should outperform the other. 

\begin{table}
	%\footnotesize
	\scriptsize
	%\ssmall
	\centering
	\begin{tabular}{lrrrrrr}\hline
	Method & \multicolumn{3}{c}{Eye state data} & \multicolumn{3}{c}{Ionosphere data}  \\
	& $n=50$ & $n=200$ & $n=1000$   & $n=50$ & $n=100$ & $n=200$       \\ \hline
	Best previous result & 39.0$_{0.4}$ & 26.9$_{0.3}$ & 13.5$_{0.2}$ & \color{red}8.1$_{0.4}$ & \color{red}6.2$_{0.2}$ & \color{red}5.2$_{0.2}$ \\
	--- using method & \scriptsize RP-QDA & \scriptsize RP-$k$nn & \scriptsize RP-$k$nn  & \color{red}\scriptsize RP-QDA  & \color{red}\scriptsize RP-QDA  & \color{red}\scriptsize RP-QDA  \\
	Tikh.\ reg.\ (linear)  & 46.1$_{0.2}$ & 42.7$_{0.2}$ & 37.8$_{0.4}$ & 19.0$_{0.2}$ & 15.2$_{0.2}$ & 13.7$_{0.1}$   \\
	Tikh.\ reg.\ (FBM-1/2) & 46.3$_{0.2}$ & 42.2$_{0.3}$ & 25.8$_{0.5}$ & 19.4$_{0.2}$ & 11.4$_{0.1}$ & 8.4$_{0.1}$   \\
	I-prior (linear) & 46.2$_{0.4}$   &40.0$_{0.2}$& 38.1$_{0.2}$  & 17.3$_{0.2}$&	14.6$_{0.1}$&	13.6$_{0.2}$	\\
	I-prior (FBM-1/2)	&\color{red}37.0$_{0.1}$&\color{red}24.0$_{0.1}$ &\color{red}10.3$_{0.1}$&11.3$_{0.1}$	&7.7$_{0.1}$	&6.1$_{0.1}$  \\[2mm]
	
	\hline
	Method & \multicolumn{3}{c}{Mice data} & \multicolumn{3}{c}{Hill-valley data}  \\
	& $n=200$ & $n=500$ & $n=1000$   & $n=100$ & $n=200$ & $n=500$       \\ \hline
	Best previous & 6.4$_{0.1}$ & 2.2$_{0.1}$ & 0.6$_{0.1}$ & 36.8$_{0.8}$ & 36.5$_{0.9}$ & 32.6$_{1.1}$ \\
	--- using method & \scriptsize LDA  & \scriptsize RP-$k$nn & \scriptsize RP-$k$nn & \scriptsize RP-LDA  & \scriptsize RP-LDA  & \scriptsize RP-LDA   \\
	Tikh.\ reg.\ (linear) & 9.5$_{0.1}$ & 4.3$_{0.1}$ & 3.6$_{0.2}$ & 50.2$_{0.0}$ & 49.8$_{0.1}$ & 35.5$_{0.2}$  \\
	Tikh.\ reg.\ (FBM-1/2)& 25.2$_{0.2}$&12.2$_{0.2}$ & 5.4$_{0.3}$ & 50.3$_{0.0}$ & 50.5$_{0.0}$ & 50.8$_{0.1}$ \\
	I-prior (linear)      & 6.3$_{0.1}$ & 4.3$_{0.1}$ & 3.8$_{0.2}$ &--$_{}$&--$_{}$&	--$_{}$	\\
	I-prior (FBM-1/2)     & 6.8$_{0.0}$ & 1.1$_{0.0}$ & 0.1$_{0.0}$ &47.4$_{0.1}$&	43.4$_{0.2}$&	32.9$_{0.2}$    \\
	I-prior (FBM-0.9)     & \color{red}3.7$_{0.1}$ & \color{red}$0.6_{0.0}$ &\color{red} 0.1$_{0.0}$                   &\color{red}32.5$_{0.1}$&	\color{red}25.4$_{0.1}$&	\color{red}18.9$_{0.2}$    \\[2mm]
	
	\hline
	Method & \multicolumn{3}{c}{Musk data} & \multicolumn{3}{c}{Arrhythmia data}  \\
	& $n=100$ & $n=200$ & $n=500$   & $n=50$ & $n=100$ & $n=200$       \\ \hline
	Best previous & 11.8$_{0.3}$ & 9.7$_{0.2}$ & 7.4$_{0.1}$ & \color{red}30.5$_{0.3}$ & \color{red}26.7$_{0.3}$ & \color{red}22.4$_{0.3}$ \\
	--- using method & \scriptsize RP-$k$nn  & \scriptsize RP-$k$nn & \scriptsize Linear-SVM  & \color{red}\scriptsize RP-QDA  & \color{red}\scriptsize RF  & \color{red}\scriptsize RF  \\
	Tikh.\ reg.\ (linear)  & 13.6$_{0.1}$ & 10.6$_{0.1}$ & 7.7$_{0.0}$ & 44.8$_{0.2}$ & 36.9$_{0.3}$ & 29.1$_{0.1}$   \\
	Tikh.\ reg.\ (FBM-1/2) & 15.1$_{0.1}$ & 11.9$_{0.1}$ & 7.8$_{0.1}$ & 46.3$_{0.2}$ & 39.6$_{0.4}$ & 29.6$_{0.1}$   \\
	I-prior (linear) &15.1$_{0.1}$&	11.5$_{0.2}$&	9.1$_{0.1}$	&		39.1$_{0.2}$ & 33.1$_{0.3}$ & 27.5$_{0.2}$ \\	
	I-prior (FBM-1/2) &\color{red}9.5$_{0.1}$& \color{red}7.0$_{0.1}$&	\color{red}5.0$_{0.1}$ &       31.8$_{0.1}$ & 28.1$_{0.1}$ & 25.5$_{0.1}$ \\[2mm]
	
	\hline
	Method & \multicolumn{3}{c}{Activity recognition data} & \multicolumn{3}{c}{Gisette data}  \\
	& $n=50$ & $n=200$ & $n=1000$   & $n=50$ & $n=200$ & $n=1000$       \\ \hline
	Best previous & 0.11$_{0.02}$ & 0.04$_{0.01}$ & 0.00$_{0.00}$ & \color{red}11.9$_{0.3}$ & \color{red}6.8$_{0.1}$ & 4.5$_{0.1}$ \\
	--- using method & \scriptsize Pen-LDA  & \scriptsize Pen-LDA  & \scriptsize Pen-LDA  & \scriptsize Linear-SVM  & \scriptsize Linear-SVM  & \scriptsize Linear-SVM   \\
	Tikh.\ reg.\ (linear) & 0.28$_{0.00}$ & 0.23$_{0.01}$ & 0.10$_{0.01}$ & 41.2$_{0.5}$ & 11.0$_{0.1}$ & 6.9$_{0.1}$  \\
	Tikh.\ reg.\ (FBM-1/2)& 0.28$_{0.01}$ & 0.25$_{0.01}$ & 0.19$_{0.01}$ & 46.0$_{0.4}$ & 15.4$_{0.4}$ & 7.6$_{0.1}$ \\
	I-prior (linear) & \color{red}0.04$_{0.00}$ & \color{red}0.00$_{0.00}$ & \color{red}0$_{0}$       & 12.3$_{0.1}$ & \color{red}6.9$_{0.1}$ & 4.5$_{0.1}$\\
	I-prior (FBM-1/2)& 0.16$_{0.00}$ & 0.03$_{0.00}$ & 0.00$_{0.00}$ & 14.1$_{0.1}$ & 7.1$_{0.1}$ & \color{red}4.2$_{0.1}$  \\
	\hline
\end{tabular}
	\caption{Average percentage test-set misclassification for eight data sets with standard errors in the subscript. The best previous results are taken from \cite{cs17}, and `RP' refers to their random projection ensemble method. It can be seen that the Tikhonov regularization method performs poorly. The best performer is colored red, which for five out of eight datasets is the I-prior, with a tie for the Gisette data set. 
	Dashes indicate the model could not be fitted due to numerical problems.}
	\label{tbl-class}
\end{table}

\section{Simulation study}\label{sec-sim}

The I-prior methodology is generally applicable, and in this section we attempt to gain some insight into its performance by considering the special case of smoothing over $[0,1]$. We compare the I-prior with Tikhonov regularization and with GPR based on the squared exponential process prior. The main result is that the I-prior estimator has better small sample performance than the Tikhonov regularizer, even in cases most favourable to the latter. Furthermore, compared to the other two methods, the squared exponential prior gives very poor performance for the roughest functions in $\cF$.

The assumed model is given by~(\ref{regr}) where $\cF$ is the centered Brownian motion RKHS over $[0,1]$ given by~(\ref{cbm}), with norm  $\norm{f}_\cF=\big(\int_0^1 \dot f(x)^2dx\big)^{1/2}$, and the errors are i.i.d.\  $N(0,1)$. 
We consider the following three estimators of $f$: 
\begin{itemize}
	
	\item The posterior mean under the I-prior
	
	\item The Tikhonov regularizer, i.e., the minimizer of
	\[  \sum_{i=1}^n(y_i-f(x_i))^2 + \lambda \int_0^1 \dot f(x)^2dx. \]
	%	Note that here $\int_0^1 \dot f(x)^2dx$ is the squared norm of $f$ in the centered FBM-1/2 RKHS.
	
	\item The posterior mean under the squared exponential Gaussian process prior (subsequently referred to as SE estimator), with covariance kernel given by~(\ref{sqexp}) with $\xi=1$. 
	
\end{itemize}
In this case, the Tikhonov regularizer is the posterior mean of the regression function under a centered Brownian motion prior. 
In all cases, we estimate the smoothing parameter by maximum marginal likelihood or the implied marginal likelihood for Tikhonov regularization.

%The SE estimator is not generally applicable for regression problems under consideration in this paper, but 
We included the SE estimator as it is commonly used, and, if the scaling parameter $\lambda$ is suitably chosen, it has optimal asymptotic convergence rate for all functions in $\cF$ \citep{vz07}. 
As mentioned above, in the present case, the regularizer of $f$ is its posterior mean under a Brownian motion prior. 
Brownian motion paths have regularity 1/2, while functions in $\cF$ have regularity greater than 1. Hence the sample paths of Brownian motion are `too rough', and the posterior mean (i.e., the Tikhonov regularizer) is expected to undersmooth. 
As shown in Section~\ref{sec-fbm}, the I-prior for $f$ is an integrated Brownian bridge which has regularity 1.5, so should perform well for functions of intermediate smoothness, but not necessarily for very rough or very smooth functions in $\cF$. Similarly, the SE estimator would not necessarily be expected to perform well for non-analytic functions.

The functions in $\cF$ have a wide range of smoothness, ranging from functions which merely have one derivative to analytic functions. Hence, no estimator can be expected to perform well for all functions in $\cF$, but a desirable estimator would perform reasonably across a wide range of smoothnesses. Normally, we would probably desire good performance for the rougher functions in $\cF$.

To assess performance, we simulated regression functions with regularities 1, 1.5, and $\infty$ (see Figure~\ref{fig-paths}). Note that for the simulations it is only necessary to evaluate the simulated functions at $x_1,\ldots,x_n$. With $h$ the covariance kernel of centered Brownian motion given by~(\ref{cbm}), $H$ the matrix with $(i,j)$th element $h(x_i,x_j)$ and $w=(w_1,\ldots,w_n)$ a vector of i.i.d.\  normals, we simulated the following:
\begin{enumerate}
	\item[(a)] `Rough' functions, generated as $\bff=H^{3/4}w$. (In the limit as $n\rightarrow\infty$, these can be shown to have regularity 1 and hence are slightly rougher than the roughest functions in $\cF$.) Due to their roughness, these the functions should most favour Tikhonov regularization.
	\item[(b)] Functions of regularity 1.5, generated as $\bff=Hw$. These are sample paths of the I-prior and this scenario should hence favour the I-prior.
	\item[(c)] Analytic functions (regularity=$\infty$) generated as sample paths of the squared exponential process with $\sigma=0.02$. Clearly, this scenario is expected to favour the squared exponential prior.
\end{enumerate}
We standardized simulated sample paths so that their RKHS norm equals 1, see Figure~\ref{fig-paths} for examples of sample paths.
The centering of the paths means no intercept needs to be estimated, simplifying the simulations.

We measured quality of estimation by the $L_2$ median absolute error (MAE):
\[  \text{MAE}(L_2):=\text{median}(\norm{\hat f-f}_{L_2}). \]
%in (i) the $\cF_n$-norm (MAE($\cF_n$)), (ii) the $\cF$-norm (MAE($\cF$)) and (iii) in the $L_2$-norm (MAE($L_2$)). 
The reason we took median rather than root mean square error was related to robustness. In particular, in a small number of cases, where we did not manage to obtain good convergence. As a result, we worried there might be some bias in the estimation of the mean squared error due to outliers, and took median squared error instead.

In a supplementary online document we also report the MAE based on two other norms, namely
$\text{MAE}(\cF_n):=\text{median}(\norm{\hat f-f}_{\cF_n})$ and $\text{MAE}(\cF):=\text{median}(\norm{\hat f-f}_{\cF})$.
Essentially the same results are seen, but more strongly.

\begin{figure}[tbp]
	\centering
	\includegraphics[width=70mm]{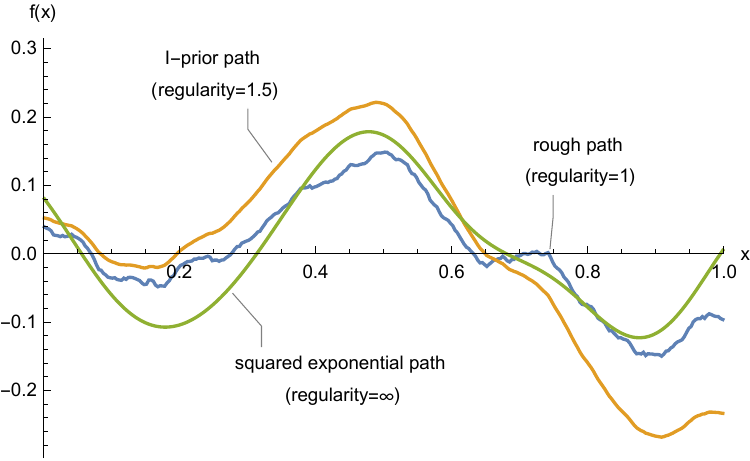}
	\caption{Example sample paths of three different regularities used in simulation. The true regression function is in the centered Brownian motion RKHS, consisting of functions of regularity greater than~1. 
		Paths are centered to integrate to zero and have unit RKHS length.
	}
	\label{fig-paths}
\end{figure}

Further simulation details are as follows. We took a sample size $n=50$ and the $x_i$ equally spaced over $[0,1]$. This sample size makes the computations tractable, and our explorations with other sample sizes showed no essential differences in conclusions.
Hyperparameters were estimated using maximum marginal likelihood. For regularization and the I-prior method, only one hyperparameter needs to be estimated, namely the scale parameter (denoted $\lambda$ in the paper). For the SE estimator, an additional hyperparameter needs to be estimated, namely the parameter $\sigma$ in the formula above. The latter makes the SE estimator significantly more difficult to compute for two reasons: (i) it takes more time to search for a local maximum of the marginal likelihood, and (ii) it is often more difficult to find the global maximum because more starting values need to be tried. As can be seen in Figure~\ref{fig-sim3}, estimation of the SE estimator broke down for very small error standard deviations and rough truths. For all estimators multiple local maxima of the marginal likelihood were sometimes encountered so we used several starting values, so that most of the time we could find the global maximum. However, in particular in some extreme cases (such as very small error standard deviations) we found for some data it could be very difficult to find the global maximum, especially for the SE estimator.
As mentioned, we computed the {\em median} absolute error (MAE) rather than the mean squared error for robustness purposes.

The simulation results are displayed in Figure~\ref{fig-sim3} using log-log plots of the MAE as a function of the error standard deviation.
It is seen that the I-prior method always outperforms regularization, though the advantage of the former is small for the roughest functions in the RKHS (see the subfigures (a)).
%Note that with respect to MAE($\cF_n$) (Figure~\ref{fig-sim1}) and not too small errors, regularization performs worse that a global constant fit (the horizontal `Baseline'). 
For rougher true regression functions in the RKHS, the I-prior estimator outperforms the SE estimator, which breaks down numerically for small errors. For analytic truths, the SE estimator outperforms the I-prior, as was to be expected. 
Overall, because the I-prior method can estimate very smooth functions quite well, but the SE estimator cannot estimate rough functions well, the SE estimator does not seem satisfactory for use in the present case. Furthermore, as mentioned before, the SE estimator is numerically more difficult to find.

%Further simulations using other loss functions are given in the online supplementary material.

\begin{figure}[tbp]
	\centering
	\subfigure[True regression function has regularity 1]
	{\hspace*{0mm}\includegraphics[width=70mm]{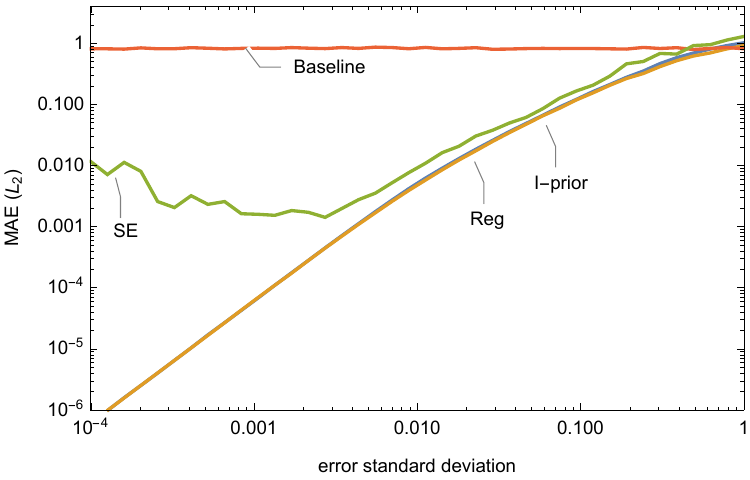}\hspace*{5mm}
		\hspace{2mm}
		% \subfigure[Random walk prior]
		{\hspace*{0mm}\includegraphics[width=70mm]{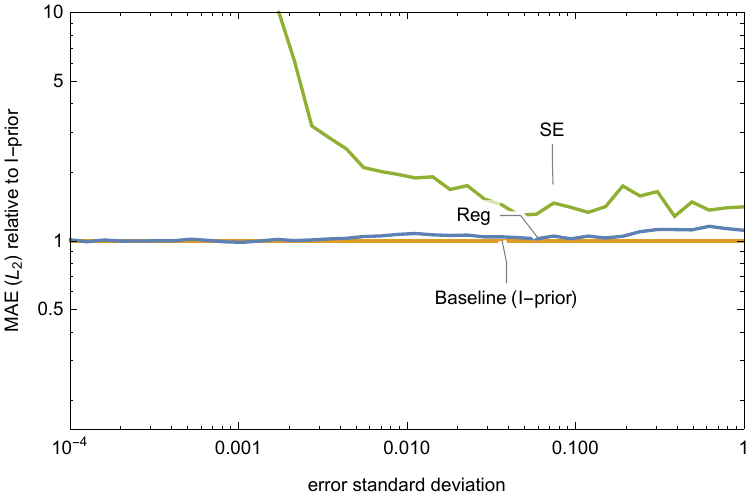}\hspace*{5mm}}}
	\\
	\subfigure[True regression function has regularity 1.5]
	{\hspace*{0mm}\includegraphics[width=70mm]{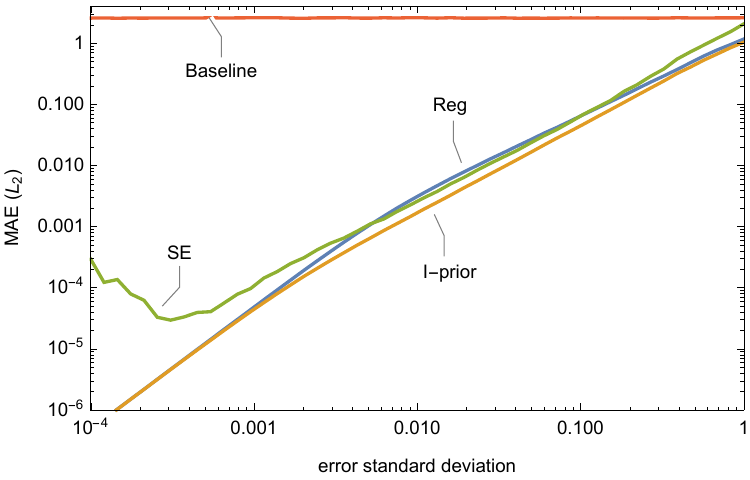}\hspace*{5mm}
		\hspace{2mm}
		% \subfigure[Summed random walk prior]
		{\hspace*{0mm}\includegraphics[width=70mm]{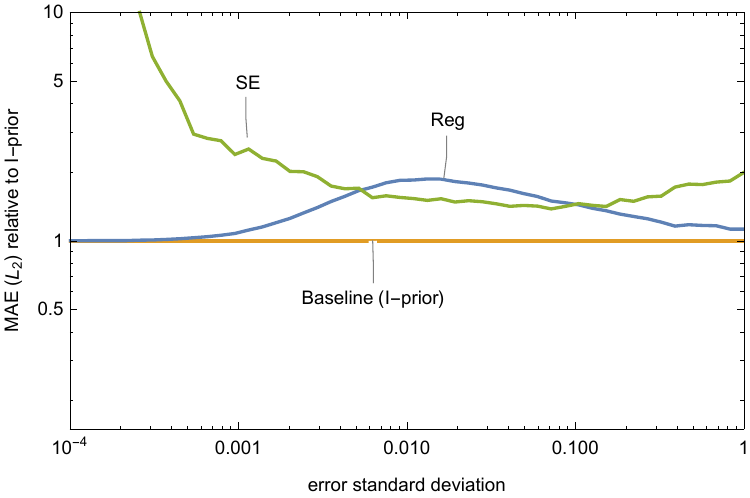}\hspace*{5mm}}}
	\\
	\subfigure[True regression function is a squared exponential Gaussian process path]
	{\hspace*{0mm}\includegraphics[width=70mm]{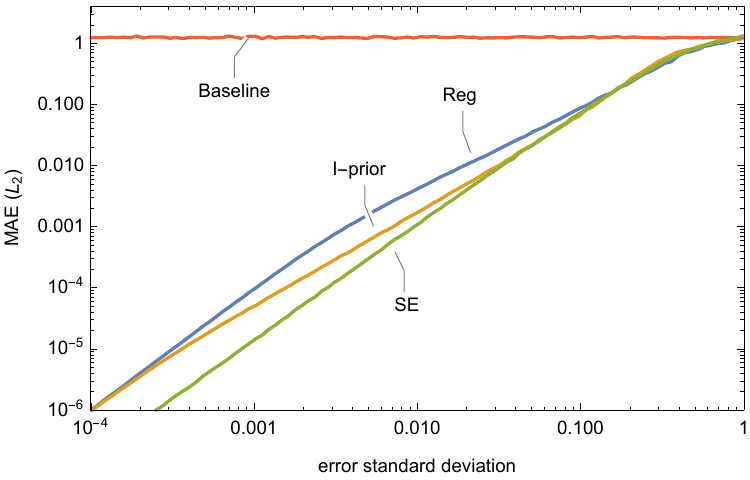}\hspace*{5mm}
		\hspace{2mm}
		% \subfigure[Exponential prior]
		{\hspace*{0mm}\includegraphics[width=70mm]{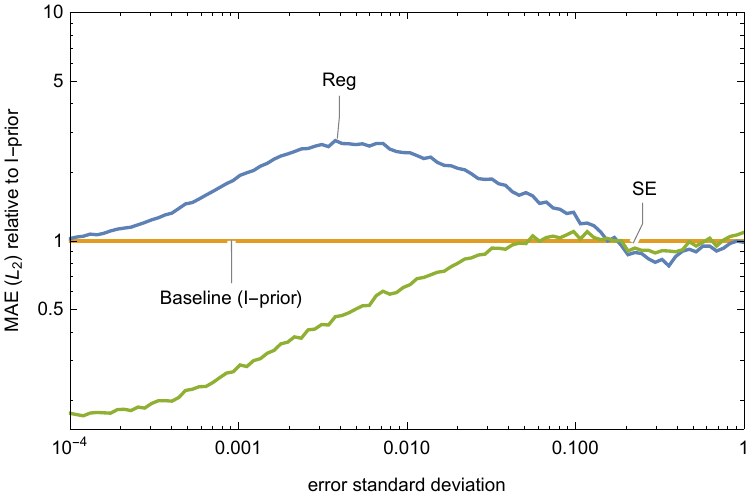}\hspace*{5mm}}}
	\\
	\caption{Panels on left: simulated MAE($L_2$) for Tikhonov regularizer (`Reg'), I-prior estimator (`I-prior'), and SE estimator (`SE'). The baseline level is the MAE if the zero function is fitted.  
		Panels on right: ratio of MAE($L_2$) for regularizer and SE estimator compared to I-prior.
		Model~(\ref{regr}) is assumed with $\cF$ the FBM-1/2 RKHS and i.i.d.\  normal errors. }
	\label{fig-sim3}
\end{figure}

\section{Conclusion}\label{sec-conc}

We introduced the I-prior methodology for estimating a parametric or nonparametric regression function in a likelihood setting. One advantage of the methodology is that, because the I-prior is proper, the posterior mean is an admissible estimator. This is unlike the Tikhonov regularizer, which is the main alternative methodology that can be used in the same setting. Simulations and real data analyses also show better performance of the I-prior methodology compared to Tikhonov regularization. 
Furthermore, the I-prior methodology is as automatic as would seem possible. The user {\em does} need to specify a distribution for the errors and a reproducing kernel for the space of regression functions, but it is hard to envisage how one could proceed without this effort. In practice, suitable choices are often i.i.d.\  or autoregressive errors, and a canonical or FBM-1/2 RKHS, for linear and smooth regression functions, respectively. In the real data examples, these were shown to give good predictive performance. 
The I-prior methodology is general in the following sense: if a regression function $f$ is assumed to lie in a Hilbert space which is not an RKHS, then the Fisher information on $f$ need not exist, in which case it is difficult to see how $f$ could be estimated in a pointwise consistent way.

Further work consists of generalizing the I-prior methodology to multiple, possibly multidimensional covariates. Each covariate then requires a separate reproducing kernel which can be combined using the ANOVA construction \citep{wahba90anova,gw93}.  Each kernel requires a scale parameter which can take values on the real line, and if some are negatively valued, the resulting kernel becomes indefinite generating a {\em reproducing kernel Krein space}. As the Fisher information remains positive definite, the I-prior methodology straightforwardly extends to this case. In this setting, variable selection can be performed.

%\cite{jamil18} gave a number of extensions to the present methodology, including logit and probit models using a fully Bayes approach, Bayesian variable selection using I-priors, and Nystrom approximations for speeding up the I-prior methodology.

\appendix

\section{Fisher information, associated distances, and maximum entropy priors}\label{app-fish}

This section puts the I-prior methodology in a somewhat broader context and shows how it can be generalized. We also give a generalization of Zellner's $g$-prior, which we call Rao-Jeffreys prior, being based on the Rao metric and Jeffreys measure. The I-prior and Rao-Jeffreys priors are based on two different Riemannian metrics derived from the Fisher information.

\subsection{Fisher information and distances between probability distributions}\label{app-fishdist}

We first define the Fisher information, and then describe two distances between probability distributions based on it, namely (i) the Rao distance, which is the length of the shortest geodesic in the Riemannian metric induced by the Fisher information matrix, and (ii) the length of the shortest geodesic in the Riemannian metric induced by the {\em inverse} Fisher information matrix. 
The former is parameterization invariant and measures the amount of information between two parameter values.
If the Fisher information does not depend on the parameter of interest, the latter can be easily related to the Cram\'er-Rao bound (see~(\ref{cr})).
Finally, we give the volume measures associated with the distances. The volume measure associated with the Rao distance is well-known to be the Jeffreys measure.

%\subsection{Fisher information on finite and infinite dimensional parameters}\label{sec-fi1}

Let $\Theta$ be a Hilbert space with inner product $\langle\cdot,\cdot\rangle_\Theta$, and let $X$ be a random variable with density in the parametric family $\{P(\cdot|\theta)|\theta\in\Theta\}$.
If $P(X|\theta)>0$, the log-likelihood function of $\theta$ is denoted $L(\theta|X)=\log P(X|\theta)$.
Assuming existence, the score is defined as the gradient $\nabla L(\theta|X)$ (see \ref{app-grad} for the definition of the gradient), and the Fisher information $I[\theta]\in\Theta\otimes\Theta$ for $\theta$ as
%\[  I[f] = E\Big[\nabla L(f|Y)\otimes\nabla L(f|Y)\,\big|\,f^\#\Big]. \]
\[  I[\theta] = -E\big[\nabla^2 L(\theta|X)\,\big|\,\theta\big]. \]
For $b\in\Theta$, denote $\theta_b=\langle\theta,b\rangle_\Theta$.
We define the Fisher information on $\theta_b$ as
\[  I[\theta_b] = \big\langle I[\theta],b\otimes b \big\rangle_{\Theta\otimes\Theta}, \]
and the Fisher information between $\theta_b$ and $\theta_{b'}$ as
\[  I[\theta_b,\theta_{b'}] = \big\langle I[\theta],b\otimes b' \big\rangle_{\Theta\otimes\Theta}, \]
where $\langle\cdot,\cdot\rangle_{\Theta\otimes\Theta}$ is the usual inner product on the tensor product space $\Theta\otimes\Theta$.

We now consider two distances between probability distributions $P(\cdot|\theta)$, $\theta\in\Theta$, based on the Fisher information on $\theta$.
We assume $\Theta$ possesses a finite dimensional parameterization such that the Fisher information $I[\theta]$ for $\theta$ is nonsingular.

The first is the well-known Rao distance $D_\text{Rao}$, defined as the length of the shortest geodesic on the Riemannian manifold whose metric tensor is the Fisher information \citep{rao45,am81,amari85}. As an example, consider the family of multivariate normal distributions with unknown mean $\mu\in\mR^p$ and known covariance matrix $\Sigma$. The Fisher information on $\mu$ is $\Sigma^{-1}$, which does not depend on $\mu$ so the metric is flat, and the Rao distance between distributions indexed by their mean is the Mahalanobis distance and is given by
\[  D_\text{Rao}(\mu,\mu')^2=(\mu-\mu')^\top\Sigma^{-1}(\mu-\mu'). \]
Methods for computing the Rao distance are given by \citet{am81}, and a list of further examples is given by \citet{rao87}.
The Rao distance is invariant to reparameterization, which is an advantage if the parameterization of the model is arbitrary, but may be a disadvantage if the parameterization is not arbitrary, because scale information is lost.

%For example, it is normally preferable to measure weight in kg rather than in kg$^2$, and an invariant distance may lose such scale information.

This paper introduces a second distance that depends on the Fisher information, namely the distance $D_I$ defined as the length of the shortest geodesic on the Riemannian manifold with metric tensor the {\em inverse} Fisher information. 
%For fixed $\theta_0$, $D_I(\theta,\theta_0)$ can be interpreted as measuring the difficulty of estimating $\norm{\theta-\theta_0}$.
%To illustrate, for the aforementioned multivariate normal family with known covariance, the distance is given by
%\[  D_I(\mu,\mu')^2=(\mu-\mu')^\top\Sigma(\mu-\mu')\]
%It can be verified that $D_I(\mu,\mu_0)$ is the Cram\'er-Rao lower bound for $\var(\half\norm{\hat\mu-\mu_0}^2)$, so $D_I(\mu,\mu_0)$ can be viewed as a measure of the difficulty of estimating $\norm{\mu-\mu_0}$. 

%For example, suppose $\Sigma=\bigl(\begin{smallmatrix}1 & 0 \\ 0 & 4\end{smallmatrix}\bigr)$. The Fisher information on $\mu$ is $\Sigma^{-1}=\bigl(\begin{smallmatrix}1 & 0 \\ 0 & 1/4\end{smallmatrix}\bigr)$, so there is more Fisher information on the first coordinate of $\mu$ than for the second, i.e., the first coordinate of $\mu$ is easier to estimate than the second. With $a$ and $b$ constants, \[  D_I((a,b),(a+1,b)) = D_I((a,b),(a,b+1/4)) = 1 \] reflecting the fact that it is equally difficult to decide whether $\mu$ equals $(a,b)$ or $(a+1,b)$, or to decide whether $\mu$ equals $(a,b)$ or $(a,b+1/4)$.

%The multivariate normal case is simple because the Fisher information does not depend on the parameter of interest. More generally, the above holds only locally, i.e., on infinitesimally small regions, $D_I$ measures how easy it is to decide which of two values of $\theta$ is closest to the truth.

For a distance $D$, define $\nu_D$ to be the associated volume measure. For Euclidean $\theta$, the densities relative to Lebesgue measure are
\begin{align}\label{jef}
\nu_{D_\text{Rao}}(\theta)=\sqrt{\abs[\big]{I[\theta]}}
\end{align}
and
\begin{align}\label{invjef}
   \nu_{D_I}(\theta)=\sqrt{|I[\theta]^{-1}|}.
\end{align}
The measure $\nu_{D_\text{Rao}}$ is the well-known Jeffreys measure or `Jeffreys prior' \citep{jeffreys46}.

%A recent overview of geometric methods in statistics is given by \citet{grrw10}.

\subsection{Maximum entropy distributions}\label{app-maxent}

%First, we show how a distribution can be defined over a metric space.
In a class of distributions, we may consider the one maximizing entropy. Such maximum entropy distributions can be thought of as the `least informative' concerning a parameter of interest, and may hence by useful as so-called noninformative prior distributions in Bayesian inference.

Let $(\Theta,D)$ be a metric space and let $\nu=\nu_D$ be a volume measure over $\Theta$ induced by $D$ (e.g., Hausdorff measure).
Denote by $\pi$ a density on $\Theta$ relative to $\nu$, i.e., if $\theta$ is a random variable with density $\pi$, then for any measurable subset $A\subset\Theta$, $\Pr(\theta\in A)=\int_A\pi(t)\nu(dt)$.
With $\theta_0\in\Theta$, let $\Pi_D$ be the class of distributions $\pi$ such that
\[  E_\pi D(\theta,\theta_0)^2 = \text{constant}. \]
The entropy of $\pi$ relative to $\nu$ is
\[  \cE(\pi) = \int_\Theta \pi(t)\log\pi(t)\nu(dt). \]
Standard variational calculus shows that, if it exists, the density maximizing $\cE(\pi)$ subject to the constraint that $\pi\in\Pi_D$ is given by
\[  \pi_{D}(t) = \frac{e^{-\lambdaexp D(t,\theta_0)^2}}{\int_\Theta e^{-\lambdaexp D(s,\theta_0)^2}\nu(ds)} \propto e^{-\lambdaexp D(t,\theta_0)^2},  \]
where $\lambda$ is a function of the above constant.
This distribution can be thought of as maximizing `uncertainty' subject to the constraint that the expected squared distance of the random variable $\theta$ from some `best guess' $\theta_0$ is fixed.
If $(\Theta,D)$ is a Euclidean space, $\nu$ is a flat (Lebesgue) measure and $\pi_D$ is a multivariate normal density.

We now give three such maximum entropy priors, including two based on the two distances defined in the previous section and derived from the Fisher information.
Suppose $\Theta$ is a finite dimensional affine subspace of a Hilbert space with norm $\norm{\cdot}_\Theta$. Set $D_\Theta(\theta,\theta')=\norm{\theta-\theta'}_\Theta$.
Defining $\pi_\text{reg}:=\pi_{D_\Theta}$ we obtain
\begin{align*}
\pi_\text{reg}(\theta) \propto e^{-\lambdaexp \norm{\theta-\theta_0}^2},
\end{align*}
which is a Gaussian density as the volume measure $\nu_{D_\Theta}$ is flat.
The subscript `reg' refers to regularization, because the posterior mode based on $\pi_\text{reg}$ is the usual regularizer of $\theta$, based on minimizing
\[  \theta \mapsto -\log\Pr(X|\theta) + \lambdaexp \norm{\theta-\theta_0}^2. \]
Alternatively, we can define a prior based on the aforementioned distances $D_\text{Rao}$ and $D_I$, which are based on the Fisher information.
We denote $\pi_{RJ}:=\pi_{D_\text{Rao}}$ and refer to this as the {\em Rao-Jeffreys prior}, being based on the Rao distance and the Jeffreys measure~(\ref{jef}).
We denote $\pi_I:=\pi_{D_I}$ and refer to this as the {\em I-prior}.
The prior densities relative to Lebesgue measure are given by
\begin{align*}
\pi_\text{RJ}(f_w) \propto \sqrt{\abs[\big]{I[\theta]}}\,e^{-\lambdaexp D_\text{Rao}(\theta,\theta_0)^2} 
\hspace{5mm}\text{and}\hspace{5mm}
\pi_I(f_w) \propto \sqrt{\abs[\big]{I[\theta]^{-1}}}\,e^{-\lambdaexp D_I(\theta,\theta_0)^2}.
\end{align*}
The I-prior can be generalized to infinite dimensional spaces, as done in this paper, but the Rao-Jeffreys prior cannot.

%\section{Proof of Lemma~\ref{lem-fish}}\label{app-fish}

\section{H\"older smoothness of FBM RKHS basis functions}\label{app-hold}

For Lemma~\ref{lem-hol1} below we need the following two lemmas on geometric inequalities.

%The triangle inequality for a normed space is formulated as
%\[  \norm{x-x'} \le \norm{x} + \norm{x'} \]
%and the parallelogram law is
%\[  \norm{x+x'}^2 + \norm{x-x'}^2 \le 2\norm{x}^2 + 2\norm{x'}^2 \]

\begin{lemma}\label{lem-tri}
	[A triangle inequality] Let $0\le\gamma\le 1$ and let $d_{ij}$ denote pairwise distances between points $i$ and $j$ in a metric space. Then
	\begin{eqnarray}\label{tri}
	d_{12}^\gamma \le d_{13}^\gamma + d_{23}^\gamma .
	\end{eqnarray}
	%with equality if and only if $AC=0$ or $BC=0$.
\end{lemma}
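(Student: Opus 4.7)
The plan is to reduce the claim to the ordinary triangle inequality in the metric space, together with the fact that the map $t \mapsto t^\gamma$ is monotone increasing and subadditive on $[0,\infty)$ when $0 \le \gamma \le 1$. Once subadditivity is in hand, the argument is a two-line chain of inequalities.

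First I would invoke the triangle inequality $d_{12} \le d_{13} + d_{23}$, which holds by definition of a metric. Raising both sides to the power $\gamma \ge 0$ (a monotone operation on $[0,\infty)$) gives
\[ d_{12}^\gamma \le (d_{13} + d_{23})^\gamma. \]
It then remains to show the subadditivity bound $(a+b)^\gamma \le a^\gamma + b^\gamma$ for all $a,b\ge 0$, which applied to $a=d_{13}$, $b=d_{23}$ yields the desired~(\ref{tri}).

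For the subadditivity step, I would handle the trivial case $a+b=0$ separately and otherwise set $t = a/(a+b) \in [0,1]$, so that $1-t = b/(a+b)$. Dividing the inequality through by $(a+b)^\gamma$ reduces the claim to
\[ 1 \le t^\gamma + (1-t)^\gamma, \qquad t\in[0,1]. \]
Since $0\le t\le 1$ and $0\le\gamma\le 1$, we have $t^\gamma \ge t$ and $(1-t)^\gamma \ge 1-t$, so the right-hand side is at least $t+(1-t)=1$, as required.

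There is no real obstacle here; the only subtlety is remembering to cover the degenerate case $a+b=0$ (so that division by $(a+b)^\gamma$ is legitimate) and to note explicitly that $x \mapsto x^\gamma$ is increasing, which is where the assumption $\gamma\ge 0$ is used (subadditivity alone would use only $\gamma\le 1$).
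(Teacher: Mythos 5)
Your proof is correct and follows essentially the same route as the paper: apply the ordinary triangle inequality, use monotonicity of $t\mapsto t^\gamma$, and then invoke subadditivity of the power function. The only difference is in how subadditivity is established --- you normalize by $a+b$ and use $t^\gamma\ge t$ on $[0,1]$, whereas the paper runs a short calculus argument on $z\mapsto 1+z^\gamma-(1+z)^\gamma$; your version is if anything cleaner, since it handles the degenerate case explicitly rather than dividing by a possibly zero distance.
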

\proof[Proof]
Let $r(z)=(1+z^\gamma)-(1+z)^\gamma$. Then $\dot r(z)=\gamma/z^{1-\gamma}-\gamma/(1+z)^{1-\gamma}>0$ for $z>0$. Hence, $r$ is increasing and since $\lim_{z\rightarrow 0}r(z)=0$, $r(z)>0$ for $z>0$.
Thus, $(1+z)^\gamma\le 1+z^\gamma$ for $z>0$.
By the triangle inequality and this result,
\[  d_{12}^\gamma \le (d_{13}+d_{23})^\gamma = d_{13}^\gamma(1+d_{23}/d_{13})^\gamma \le d_{13}^\gamma(1 + (d_{23}/d_{13})^\gamma) = d_{13}^\gamma + d_{23}^\gamma.
\]
\endproof

\begin{lemma}\label{lem-par} [A parallelogram inequality]
	Let $x$ and $x'$ be points in a Hilbert space $(\cX,\norm{\cdot})$. Then for $0\le\gamma\le 1$,
	\begin{eqnarray}\label{par}
	\norm{x+x'}^{2\gamma} + \norm{x-x'}^{2\gamma} \le 2\norm{x}^{2\gamma} + 2\norm{x'}^{2\gamma},
	\end{eqnarray}
	with equality if $\gamma=1$.
\end{lemma}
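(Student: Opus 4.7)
The plan is to reduce the parallelogram inequality~(\ref{par}) to two standard one-variable facts glued together by the parallelogram identity. First I would introduce the shorthand $a = \norm{x+x'}^2$, $b = \norm{x-x'}^2$, $c = \norm{x}^2$, and $d = \norm{x'}^2$, and observe that the ordinary parallelogram identity in the Hilbert space $\cX$ gives $a+b = 2c+2d$, equivalently $(a+b)/2 = c+d$. The target inequality then reads $a^\gamma + b^\gamma \le 2c^\gamma + 2d^\gamma$.

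The main step is to show the intermediate bound $a^\gamma + b^\gamma \le 2(c+d)^\gamma$. Since $z \mapsto z^\gamma$ is concave on $[0,\infty)$ for $\gamma \in [0,1]$, a two-point Jensen inequality applied to $a,b$ with equal weights yields
\[
\frac{a^\gamma + b^\gamma}{2} \le \left(\frac{a+b}{2}\right)^\gamma = (c+d)^\gamma.
\]
Observe that this step uses nothing about $x, x'$ beyond the parallelogram identity; all the Hilbert-space structure has been absorbed into the identity $a+b = 2c+2d$.

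To finish, I would invoke the subadditivity $(c+d)^\gamma \le c^\gamma + d^\gamma$, which is essentially already contained in the proof of Lemma~\ref{lem-tri}: there it is shown that $(1+z)^\gamma \le 1+z^\gamma$ for $z \ge 0$, and rescaling (when $c>0$) by the factor $c^\gamma$ gives the general statement; the case $c=0$ is trivial. Chaining the two bounds yields $a^\gamma + b^\gamma \le 2(c+d)^\gamma \le 2c^\gamma + 2d^\gamma$, which is~(\ref{par}).

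For the equality claim at $\gamma = 1$, both inequalities collapse: Jensen becomes equality because $z \mapsto z$ is affine, and the subadditivity step reduces to $c+d = c+d$. I do not anticipate any real obstacle; both one-variable ingredients are elementary, and the coupling to the Hilbert-space setting is handled entirely through the parallelogram identity, so the remaining work is bookkeeping.
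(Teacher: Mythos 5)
Your proof is correct, but it takes a genuinely different route from the paper's. You work entirely inside the original Hilbert space $\cX$: setting $a=\norm{x+x'}^2$, $b=\norm{x-x'}^2$, $c=\norm{x}^2$, $d=\norm{x'}^2$, the exact parallelogram identity $a+b=2c+2d$ plus concavity of $t\mapsto t^\gamma$ (two-point Jensen) gives $a^\gamma+b^\gamma\le 2(c+d)^\gamma$, and the subadditivity $(c+d)^\gamma\le c^\gamma+d^\gamma$ (which, as you note, is exactly the inequality $(1+z)^\gamma\le 1+z^\gamma$ established inside the proof of Lemma~\ref{lem-tri}) finishes the chain; equality at $\gamma=1$ is immediate since both steps collapse. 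The paper instead argues geometrically in the embedded space: it invokes the Schoenberg embedding $\phi_\gamma$ satisfying~(\ref{emb}), places the four points $A=\phi_\gamma(0)$, $B=\phi_\gamma(x)$, $C=\phi_\gamma(x+x')$, $D=\phi_\gamma(x')$, completes $A,B,D$ to a parallelogram with fourth vertex $E=B+D-A$, applies the parallelogram law to get $AE^2+BD^2=2AB^2+2AD^2$, and then uses a median/symmetry argument ($AM=CM$ for the midpoint $M$ of $BD$) together with the triangle inequality to conclude $AC\le AE$. Your argument is shorter, more elementary, and self-contained --- in particular it does not rely on the existence of the embedding $\phi_\gamma$, which is a nontrivial input cited from Schoenberg --- while the paper's proof has the virtue of explaining the name ``parallelogram inequality'' geometrically and of reusing machinery already needed to define the FBM kernel. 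Either proof is acceptable; yours could even be preferred for its economy and for making the dependence on the Hilbert-space structure transparent (it enters only through the parallelogram identity).
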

\begin{proof}
	With $\phi$ a metric embedding into a Hilbert space satisfying~(\ref{emb}), let $A=\phi_{\gamma}(0)$, $B=\phi_{\gamma}(x)$, $C=\phi_{\gamma}(x+x')$ and $D=\phi_{\gamma}(x')$. Denoting the length of the line segment between $A$ and $B$ by $AB$, and so on, we have $AB=CD=\norm{x}^{\gamma}$, $AD=BC=\norm{x'}^{\gamma}$, and $BD=\norm{x-x'}^{\gamma}$.
	With $E=B+D-A$, the points $A,B,E,D$ form a parallelogram, and the parallelogram law gives
	\begin{equation}\label{par3}  AE^2 + BD^2 = 2AB^2 + 2AD^2 \end{equation}
	Let $M=(A+E)/2=(B+D)/2$ be the midpoint of the parallelogram.
	By a symmetry argument, $AM=CM$, and the triangle inequality gives $AC\le AM+CM=AE$.
	Hence, using~(\ref{par3}), $AC^2 + BD^2 \le 2AB^2 + 2AD^2$, which is equivalent to~(\ref{par}) and completes the proof.
\end{proof}

%\bigskip
\noindent
For $\gamma=1$,~(\ref{tri}) is the usual triangle inequality and~(\ref{par}) is the parallelogram law.

\begin{lemma}\label{lem-hol1}
	For $0<\gamma<1$ and $x_0\in\cX$, the function $h_\gamma(x_0,\cdot)$ over $\cX$ is H\"older of order~$2\gamma$.
\end{lemma}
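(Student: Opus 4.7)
My plan is to reduce the claim to a H\"older bound on the single function $\psi: x \mapsto \norm{x}_\cX^{2\gamma}$. Writing out the definition of $h_\gamma$ gives
\[ h_\gamma(x_0, x) = \tfrac12\norm{x_0}_\cX^{2\gamma} - \tfrac12\norm{x_0-x}_\cX^{2\gamma} + \tfrac12\norm{x}_\cX^{2\gamma}, \]
so the dependence on $x$ is a linear combination of $\psi$ evaluated at $x$ and at $x_0 - x$, plus a constant. Since both H\"older conditions~(\ref{holder}) and~(\ref{holder2}) are preserved under affine reparameterization and finite linear combinations, it suffices to verify that $\psi$ itself is H\"older of order $2\gamma$. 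The argument then splits naturally at the threshold $2\gamma = 1$.

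For $0 < \gamma \le 1/2$, so $2\gamma \le 1$, I would invoke the first-order condition~(\ref{holder}). Applying Lemma~\ref{lem-tri} with exponent $2\gamma$ to the triangle $0, x, x'$ yields $\norm{x}_\cX^{2\gamma} \le \norm{x'}_\cX^{2\gamma} + \norm{x-x'}_\cX^{2\gamma}$, and swapping $x$ and $x'$ gives the matching inequality, so $\abs{\psi(x) - \psi(x')} \le \norm{x-x'}_\cX^{2\gamma}$.

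For $1/2 < \gamma < 1$, so $1 < 2\gamma < 2$, I would verify the second-order condition~(\ref{holder2}) in the form $\abs{\psi(x+t) - 2\psi(x) + \psi(x-t)} \le 2\norm{t}_\cX^{2\gamma}$. The upper bound $\psi(x+t) + \psi(x-t) - 2\psi(x) \le 2\norm{t}_\cX^{2\gamma}$ is exactly the parallelogram inequality of Lemma~\ref{lem-par}. For the matching lower bound, I would note that $r \mapsto r^{2\gamma}$ is convex and nondecreasing on $[0,\infty)$ whenever $2\gamma \ge 1$, and the Hilbert space norm $\norm{\cdot}_\cX$ is itself convex, so their composition $\psi$ is convex on $\cX$; applied to the midpoint identity $x = \tfrac12(x+t) + \tfrac12(x-t)$ this gives $\psi(x+t) + \psi(x-t) \ge 2\psi(x)$, i.e.\ the symmetric second difference is nonnegative.

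The main obstacle is the lower bound in the second case: Lemmas~\ref{lem-tri} and~\ref{lem-par} alone supply only one-sided control on the symmetric second difference of $\psi$, so the convexity observation---a consequence of $2\gamma \ge 1$---is the genuinely new ingredient beyond the two preceding lemmas. Once $\psi$ is known to be H\"older of order $2\gamma$, the bound for $h_\gamma(x_0, \cdot)$ follows routinely by the triangle inequality applied to its two nontrivial summands.
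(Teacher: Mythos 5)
Your proof is correct and follows essentially the same route as the paper's: the same case split at $\gamma=1/2$, with Lemma~\ref{lem-tri} handling the first-order H\"older condition and Lemma~\ref{lem-par} the second-order one, each applied separately to the two non-constant summands of $h_\gamma(x_0,\cdot)$. The one substantive difference is your treatment of $1/2<\gamma<1$: you correctly observe that Lemma~\ref{lem-par} bounds the symmetric second difference $\norm{u+t}_\cX^{2\gamma}+\norm{u-t}_\cX^{2\gamma}-2\norm{u}_\cX^{2\gamma}$ only from above, whereas the absolute value in~(\ref{holder2}) also needs a matching lower bound, and you supply it by noting that $u\mapsto\norm{u}_\cX^{2\gamma}$ is convex when $2\gamma\ge 1$ (being the composition of the convex norm with the nondecreasing convex map $r\mapsto r^{2\gamma}$), so the second difference is nonnegative. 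The paper's proof cites only the parallelogram inequality at this step and leaves the lower bound implicit; your convexity observation is a genuine addition that closes this (easily repaired) gap, since applying Lemma~\ref{lem-par} in the reverse direction does not yield a usable lower bound. The resulting constant $K=2$ is harmless, as~(\ref{holder2}) permits any $K>0$.
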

\begin{proof}
	For $0<\gamma\le 1/2$ we have 
	\begin{align*}
	\abs{h_\gamma(x_0,x)-h_\gamma(x_0,x')} 
	&= 
	\frac12\abs[\big]{\norm{x_0-x}_\cX^{2\gamma} - \norm{x_0-x'}_\cX^{2\gamma} - \norm{x}_\cX^{2\gamma} + \norm{x'}_\cX^{2\gamma}} \\
	&\le 
	\frac12\abs[\big]{\norm{x_0-x}_\cX^{2\gamma} - \norm{x_0-x'}_\cX^{2\gamma}} + \frac12\abs[\big]{ \norm{x}_\cX^{2\gamma} - \norm{x'}_\cX^{2\gamma}} \\
	&\le \norm{x-x'}_\cX^{2\gamma},
	\end{align*}
	where the last inequality is due to the triangle inequality given in Lemma~\ref{lem-tri}. 
	For $1/2<\gamma<1$,
	\begin{align*}
	\lefteqn{\abs{h_\gamma(x_0,x-t)-2h_\gamma(x_0,x)+h_\gamma(x_0,x+t)} }\\
	&= 
	\frac12\abs[\Big]{\norm{x_0-x+t}_\cX^{2\gamma} - 2\norm{x_0-x}_\cX^{2\gamma} + \norm{x_0-x-t}_\cX^{2\gamma} - \norm{x-t}_\cX^{2\gamma} + 2\norm{x}_\cX^{2\gamma} - \norm{x+t}_\cX^{2\gamma}} \\
	&\le 
	\frac12\abs[\Big]{\norm{x_0-x+t}_\cX^{2\gamma} - 2\norm{x_0-x}_\cX^{2\gamma} + \norm{x_0-x-t}_\cX^{2\gamma}} +
	\frac12\abs[\Big]{\norm{x-t}_\cX^{2\gamma} - 2\norm{x}_\cX^{2\gamma} + \norm{x+t}_\cX^{2\gamma}} \\
	&\le \norm{x-x'}_\cX^{2\gamma},
	\end{align*}
	where the last inequality is due to the parallelogram inequality given in Lemma~\ref{lem-par}.
\end{proof}

\section{The gradient}\label{app-grad}

Let $(\cH,\langle\cdot,\cdot\rangle)$ be an inner product space and consider a function $g:\cH\rightarrow\mR$.
%The {\em directional derivative} of $g$ in the direction $s\in\cH$ is defined as $\partial g/\partial\langle s,g\rangle_\cH$
Denote the directional derivative of $g$ in the direction $s\in\cH$ by $\nabla_s g$, that is,
\begin{eqnarray}\label{dirder}
\nabla_s g(x) = \lim_{\delta\rightarrow 0}\frac{g(x+\delta s)-g(x)}{\delta}.
\end{eqnarray}
The {\em gradient} of $g$, denoted by $\nabla g$, is the unique vector field satisfying
\[  \langle \nabla g(x),s\rangle = \nabla_s g(x) \hspace{6mm}\forall x,s\in\cH. \]
%Suppose, for all $x\in\cH$, $f(x)=\langle a+bx,\beta\rangle_\cH$ for some scalar $b$ and $a,\beta\in\cH$, then it is straightforward to verify that $\nabla f=b\beta$.

\section{Duality between AR(1) and MA(1) processes}\label{app-ar}

Let $\alpha$ be a real number. Let $u=(u_1,\ldots,u_n)$ be the AR(1) process with parameter $\alpha$ defined by
\begin{eqnarray*}
	u_{1} = \epsilon_1 \hspace{10mm}
	u_{i} = \alpha u_{i-1} + \epsilon_i  \hspace{2mm}(i=2,\ldots,n),
\end{eqnarray*}
where the $\epsilon_i$ are i.i.d.\  $N(0,\sigma^2)$.
Let $v=(v_1,\ldots,v_n)$ be the MA(1) process with parameter $-\alpha$ defined by
\begin{eqnarray*}
	v_{i} = \zeta_i - \alpha\zeta_{i+1}  \hspace{2mm}(i=1,\ldots,n-1)\hspace{10mm}
	v_{n} = \zeta_n,
\end{eqnarray*}
where the $\zeta_i$ are i.i.d.\  $N(0,\sigma^{-2})$.
Denote the covariance matrices of $u$ and $v$ by $V_u$ and $V_v$.
\begin{lemma}\label{lem-ar}
	$V_v=V_u^{-1}$.
\end{lemma}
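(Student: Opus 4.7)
The plan is to exhibit explicit triangular factorizations of $V_u$ and $V_v$ and then verify that the triangular factors are inverses of each other.

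First I would write the AR(1) recursion in matrix form. Unrolling gives $u_i = \sum_{j=1}^{i}\alpha^{i-j}\epsilon_j$, so $u = L\epsilon$, where $L$ is the lower triangular matrix with $L_{ij}=\alpha^{i-j}$ for $j\le i$ and zero otherwise. Since $\text{cov}(\epsilon)=\sigma^2 I$, this yields
\[
V_u = \sigma^2 L L^\top.
\]
Similarly, the MA(1) definition $v_i = \zeta_i - \alpha\zeta_{i+1}$ (with $v_n=\zeta_n$) gives $v = M\zeta$, where $M$ is the upper bidiagonal matrix with $M_{ii}=1$ and $M_{i,i+1}=-\alpha$. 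Since $\text{cov}(\zeta)=\sigma^{-2}I$,
\[
V_v = \sigma^{-2} M M^\top.
\]

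Next I would show that $M^\top = L^{-1}$. Note that $M^\top$ is the lower bidiagonal matrix with $1$s on the diagonal and $-\alpha$ on the subdiagonal. To verify $L M^\top = I$, I would compute the $(i,j)$ entry of this product: only the diagonal and subdiagonal entries of column $j$ of $M^\top$ are nonzero, so the product equals $L_{ij}\cdot 1 + L_{i,j+1}\cdot(-\alpha)$ for $j<i$, which is $\alpha^{i-j}-\alpha\cdot\alpha^{i-j-1}=0$, while for $j=i$ it gives $L_{ii}=1$, and for $j>i$ it vanishes because all relevant $L_{ij'}$ are zero. Thus $M^\top = L^{-1}$.

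Combining these pieces,
\[
V_v = \sigma^{-2} M M^\top = \sigma^{-2}(L^{-1})^\top L^{-1} = \sigma^{-2}(L L^\top)^{-1} = \bigl(\sigma^2 L L^\top\bigr)^{-1} = V_u^{-1},
\]
as required. The only nontrivial step is the bidiagonal-triangular inversion identity $M^\top = L^{-1}$, but it is a short direct computation, so I do not anticipate any real obstacles; the rest is bookkeeping of the two factorizations and the reciprocal innovation variances $\sigma^2$ and $\sigma^{-2}$, which are precisely what makes the scalar factors cancel.
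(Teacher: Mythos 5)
Your proof is correct and follows essentially the same route as the paper: both write $u=A\epsilon$ and $v=B\zeta$ with the same triangular/bidiagonal factors (your $L$ and $M$ are the paper's $A$ and $B$), verify by direct multiplication that the factors are mutually inverse, and conclude $V_uV_v=I$ from the factorizations $V_u=\sigma^2AA^\top$ and $V_v=\sigma^{-2}BB^\top$. The only cosmetic difference is that you spell out the entrywise computation of $LM^\top=I$ that the paper leaves as ``direct multiplication shows''.
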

\proof
Write $\epsilon=(\epsilon_1,\ldots,\epsilon_n)^\top$ and $\zeta=(\zeta_1,\ldots,\zeta_n)^\top$. Then
$u=A\epsilon$ and $v=B\zeta$ where $A$ and $B$ have elements
\[  a_{ij} = \left\{ \begin{array}{cl} 0 & i<j \\ \alpha^{i-j} & i\ge j \end{array}  \right.
\hspace{10mm}
b_{ij} = \left\{ \begin{array}{rl} 1 & i=j \\ -\alpha & i=j-1 \\ 0 & \text{otherwise} \end{array}  \right. . \]
Direct multiplication shows that $AB^\top=A^\top B=I$. Now $V_u=\sigma^2AA^\top$ and $V_v=\sigma^{-2}BB^\top$, so $V_uV_v=I$, which is the desired result.
\endproof

\bibliographystyle{elsarticle-harv}
\bibliography{stats}

%\footnotesize
%\bibliographystyle{apacite}
%\bibliography{stats}

\end{document}

% --- supplement: I-prior_univariate11_supplement.tex ---

\begin{frontmatter}
		
		%% Title, authors and addresses
		
		%% use the tnoteref command within \title for footnotes;
		%% use the tnotetext command for theassociated footnote;
		%% use the fnref command within \author or \address for footnotes;
		%% use the fntext command for theassociated footnote;
		%% use the corref command within \author for corresponding author footnotes;
		%% use the cortext command for theassociated footnote;
		%% use the ead command for the email address,
		%% and the form \ead[url] for the home page:
		%% \title{Title\tnoteref{label1}}
		%% \tnotetext[label1]{}
		%% \author{Name\corref{cor1}\fnref{label2}}
		%% \ead{email address}
		%% \ead[url]{home page}
		%% \fntext[label2]{}
		%% \cortext[cor1]{}
		%% \address{Address\fnref{label3}}
		%% \fntext[label3]{}
		
		\title{Supplementary material for the article ``Regression with I-priors''}
		
		%% use optional labels to link authors explicitly to addresses:
		%% \author[label1,label2]{}
		%% \address[label1]{}
		%% \address[label2]{}
		
		%\author{Wicher P.\ Bergsma}
		%\address{London School of Economics, Houghton Street, London, WC2A 2AE, United Kingdom}
		
		\begin{abstract}
Simulations complementing the ones in Section~7 in the main article are presented. 
		
		%%Graphical abstract
%		\begin{graphicalabstract}
			%\includegraphics{grabs}
%		\end{graphicalabstract}
		\end{abstract}

	\end{frontmatter}
	
	%% \linenumbers
	
	%% main text

\section{Further simulations}\label{app-sim}

In the main article, we considered median absolute errors (MAEs) based on $L_2$ loss, summarized in Figure 5 there. Here we report to additional figures, visualizing the MAE based on two other norms, namely
\begin{itemize}
	\item $\text{MAE}(\cF_n):=\text{median}(\norm{\hat f-f}_{\cF_n})$
	\item $\text{MAE}(\cF):=\text{median}(\norm{\hat f-f}_{\cF})$
\end{itemize}
The simulation results are displayed in Figures~\ref{fig-sim1} and \ref{fig-sim2} using log-log plots of the MAE as a function of the error standard deviation.
As before, it is seen that the I-prior method always outperforms regularization, though the advantage of the former is small for the roughest functions in the RKHS (see the subfigures (a)).
Note that with respect to MAE($\cF_n$) (Figure~\ref{fig-sim1}) and not too small errors, regularization performs worse that a global constant fit (the horizontal `Baseline'). 
For rougher true regression functions in the RKHS, the I-prior estimator outperforms the SE estimator (the posterior mean under a square exponential process prior), which breaks down numerically for small errors. For analytic truths, the SE estimator outperforms the I-prior, as was to be expected.

\begin{figure}[tbp]
	\centering
	\subfigure[True regression function has regularity 1]
	{\hspace*{0mm}\includegraphics[width=70mm]{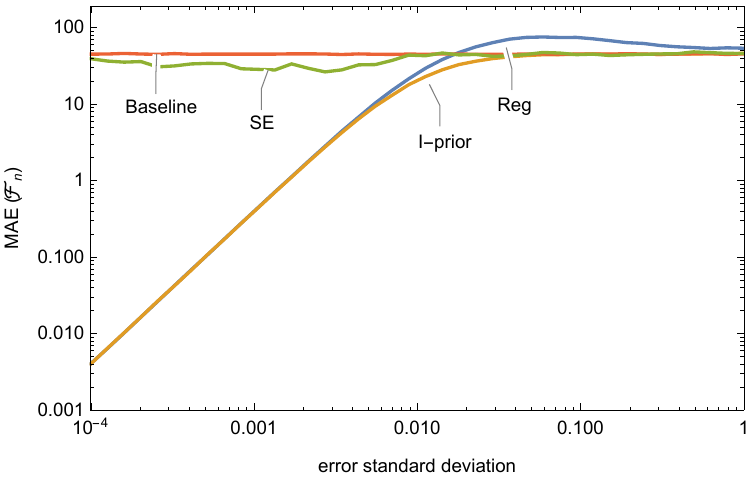}\hspace*{5mm}
		% \hspace{2mm}
		% \subfigure[Random walk prior]
		{\hspace*{0mm}\includegraphics[width=70mm]{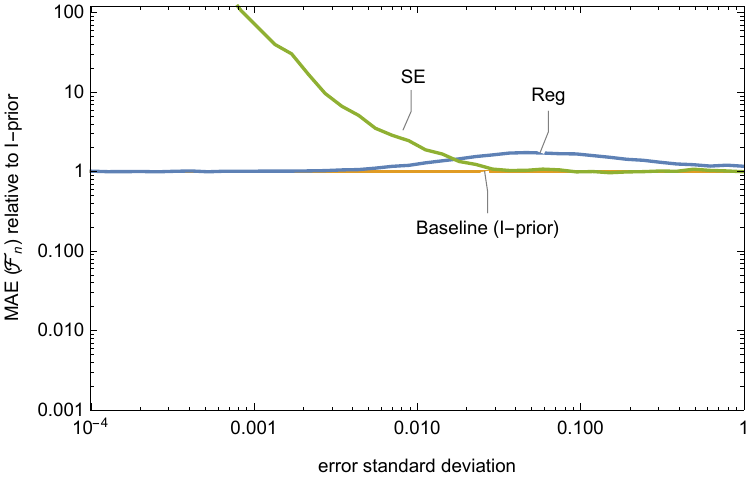}%\hspace*{5mm}
	}}
	\\
	\subfigure[True regression function has regularity 3/2]
	{\hspace*{0mm}\includegraphics[width=70mm]{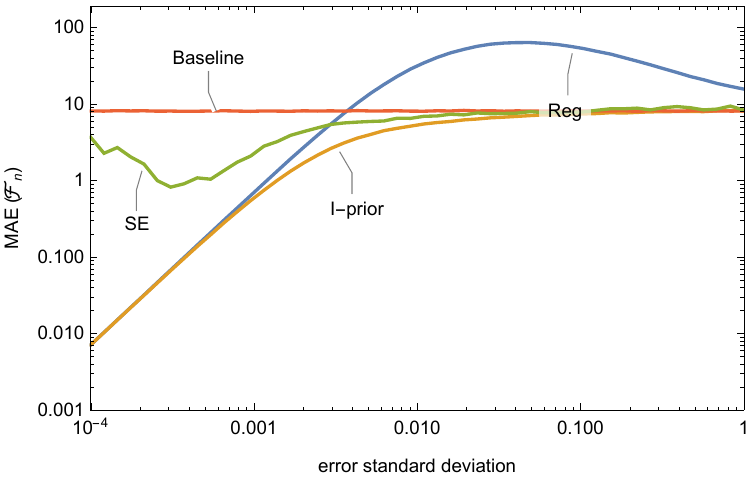}\hspace*{5mm}
		% \hspace{2mm}
		% \subfigure[Summed random walk prior]
		{\hspace*{0mm}\includegraphics[width=70mm]{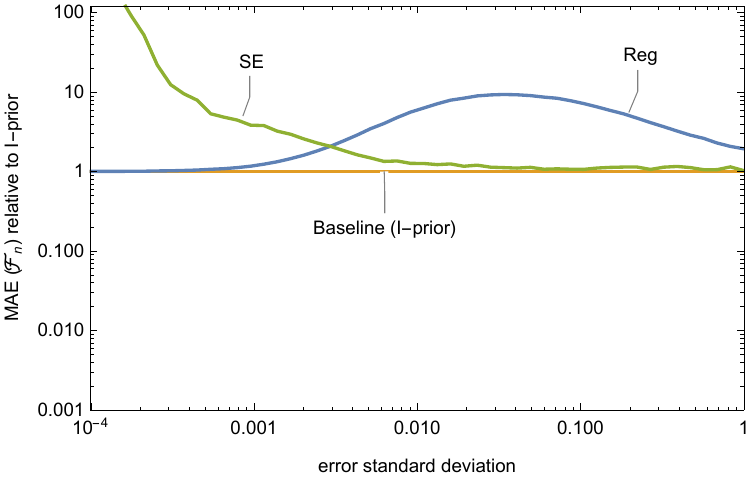}}}
	\\
	\subfigure[True regression function is a squared exponential Gaussian process path]
	{\hspace*{0mm}\includegraphics[width=70mm]{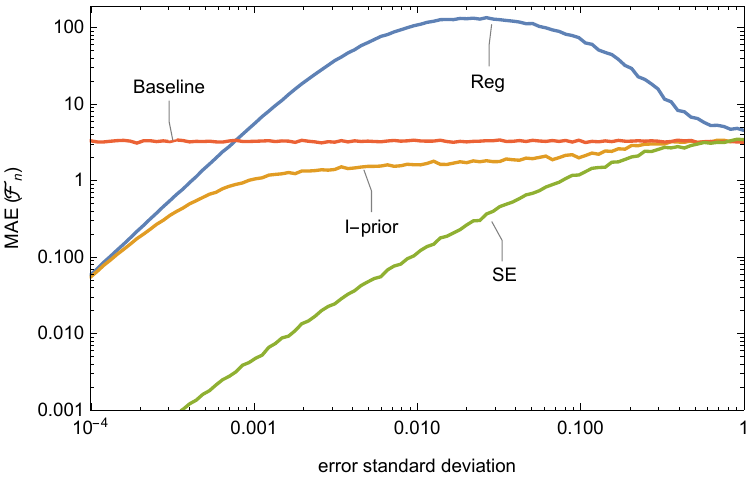}\hspace*{5mm}
		% \hspace{2mm}
		% \subfigure[Exponential prior]
		{\hspace*{0mm}\includegraphics[width=70mm]{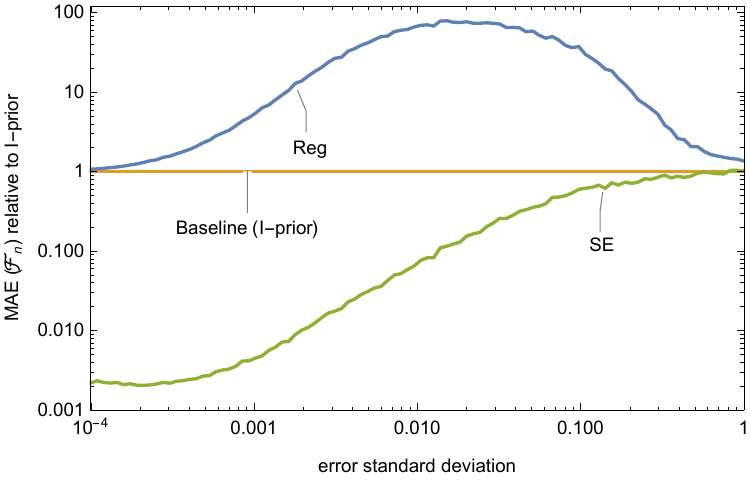}}}
	\\
	\caption{Panels on left: simulated MAE($\cF_n$) for Tikhonov regularizer (`Reg'), I-prior estimator (`I-prior'), and SE estimator (`SE'). The baseline level is the MAE if the zero function is fitted.  
		Panels on right: ratio of MAE($\cF_n$) for regularizer and SE estimator compared to I-prior.
		Model~(1) in the main article is assumed with $\cF$ the FBM-1/2 RKHS and i.i.d.\  normal errors. }
	\label{fig-sim1}
\end{figure}

\begin{figure}[tbp]
	\centering
	\subfigure[True regression function has regularity 1]
	{\hspace*{0mm}\includegraphics[width=70mm]{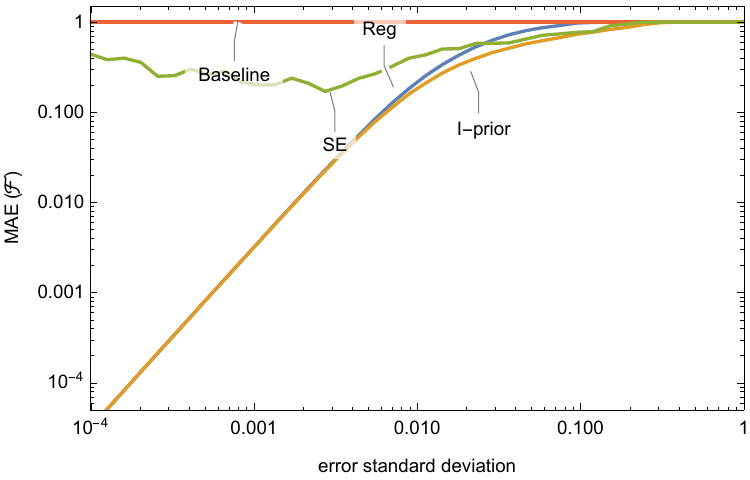}\hspace*{5mm}
		% \hspace{2mm}
		% \subfigure[Random walk prior]
		{\hspace*{0mm}\includegraphics[width=70mm]{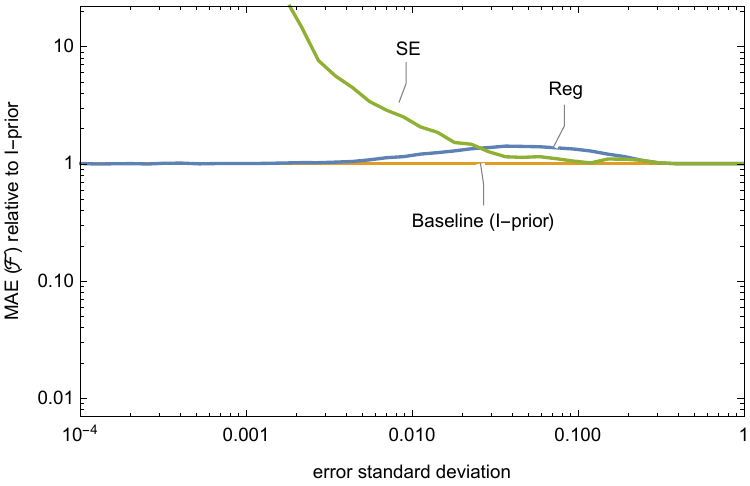}%\hspace*{5mm}
	}}
	\\
	\subfigure[True regression function has regularity 3/2]
	{\hspace*{0mm}\includegraphics[width=70mm]{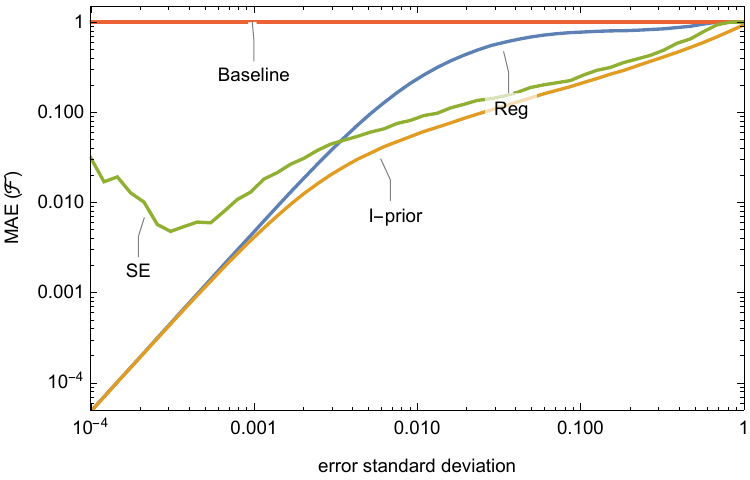}\hspace*{5mm}
		% \hspace{2mm}
		% \subfigure[Summed random walk prior]
		{\hspace*{0mm}\includegraphics[width=70mm]{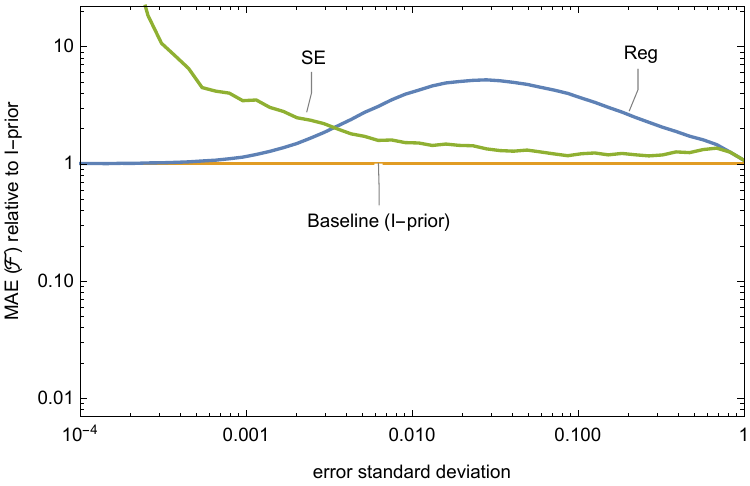}}}
	\\
	\subfigure[True regression function is a squared exponential Gaussian process path]
	{\hspace*{0mm}\includegraphics[width=70mm]{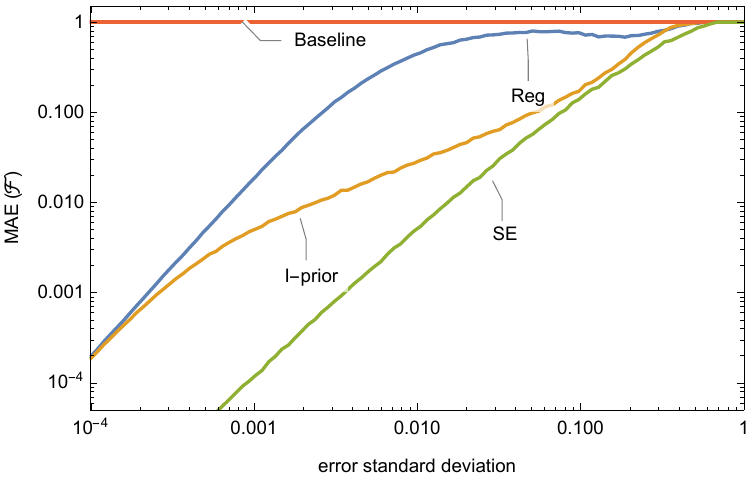}\hspace*{5mm}
		% \hspace{2mm}
		% \subfigure[Exponential prior]
		{\hspace*{0mm}\includegraphics[width=70mm]{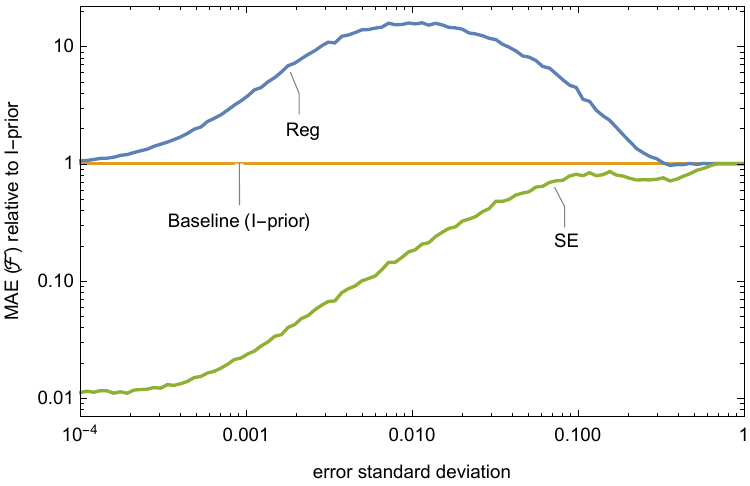}}}
	\\
	\caption{Panels on left: simulated MAE($\cF$) for Tikhonov regularizer (`Reg'), I-prior estimator (`I-prior'), and SE estimator (`SE'). The baseline level is the MAE if the zero function is fitted.  
		Panels on right: ratio of MAE($\cF$) for regularizer and SE estimator compared to I-prior.
		Model~~(1) in the main article is assumed with $\cF$ the FBM-1/2 RKHS and i.i.d.\  normal errors. }
	\label{fig-sim2}
\end{figure}

%\bibliographystyle{elsarticle-harv}
%\bibliography{stats}

%\footnotesize
%\bibliographystyle{apacite}
%\bibliography{stats}